\documentclass[11pt]{elsarticle}
\usepackage{amsmath, amssymb}
\usepackage{tikz}
\usepackage{tikz-3dplot}

\usetikzlibrary{arrows.meta, positioning, 3d}

\usepackage{caption}
\usepackage{amsmath,amssymb,amsfonts}
\usepackage{algorithmicx}
\usepackage{graphicx}
\usepackage{textcomp}
\usepackage{xcolor}
\usepackage{lineno}
\usepackage{float}
\usepackage{resizegather}
\usepackage[ruled,linesnumbered]{algorithm2e}
\usepackage{algpseudocode}
\usepackage{epsfig}
\usepackage[all]{xy}

\usepackage{tikz} 
\usetikzlibrary{decorations.pathreplacing,angles,quotes}
\usetikzlibrary{matrix,arrows,decorations.pathmorphing}
\usepackage{tikz-cd}
\usetikzlibrary{arrows}
\usetikzlibrary{decorations.markings}
\definecolor{darkgreen}{rgb}{0.0,0.5,0.0}

\usepackage{amsmath,amssymb,amsthm}
\usepackage{url}
\newtheorem{defi}{Definition}
\newtheorem{prop}{Proposition}

\newtheorem{theo}{Theorem}
\newtheorem{cor}{Corollary}

\newtheorem{exa}{Example}
\usepackage{subcaption}

\newcommand{\pt}{\text{ }\forall\text{ }}
\newcommand{\tq}{\text{ }:\text{ }}
\newcommand{\N}{\mathbb{N}}

\newcommand{\io}{[0,1]}
\newcommand{\Ln}{L_n}
\newcommand{\ci}{\mathbb{I}}

\begin{document}

\begin{frontmatter}

\title{ On the simplicial structure of uncertain information}
\author[uja]{Juan Martínez-Moreno}
\ead{jmmoreno@ujaen.es}
\author[linares]{Diego García-Zamora\corref{mycorrespondingauthor}}
\cortext[mycorrespondingauthor]{Corresponding author}
\ead{dgzamora@ujaen.es}

\address[uja]{Department of Mathematics, Universidad de Jaén, Campus Las Lagunillas, 23071, Jaén, Spain}
\address[linares]{Department of Mathematics, Universidad de Jaén, Campus Científico Tecnológico de Linares, 23700, Linares, Spain}
\begin{abstract}
The mathematical representation of uncertainty has led to a proliferation of preference structures, such as interval-valued fuzzy sets, intuitionistic fuzzy sets, and various granular models. While these extensions are often studied independently, they share profound geometric and topological foundations. This paper provides a unifying framework by identifying these disparate structures with the simplicial geometry of $n$-dimensional fuzzy sets. We first conduct an extensive revision of both classical and modern preference structures, demonstrating that they are distinct semantic interpretations of the same underlying topological objects within the lattice $L_n$. Building on this unification, we introduce a new, highly interpretable preference structure based on Deck-of-Cards membership functions. This approach generalizes the revised models by providing a flexible mechanism to represent complex membership degrees through monotonic sequences. Furthermore, we establish a formal simplicial structure for the set of multidimensional fuzzy sets $L_\infty$. By employing face and degeneracy maps, we demonstrate how this framework unifies existing models into a single simplicial set, allowing for the consistent transformation of information across different levels of granularity. The examples provided illustrate the utility of this simplicial connection in several contexts, offering a robust topological foundation for future developments in fuzzy set theory.
\end{abstract}

\begin{keyword}
Fuzzy sets extensions, Multidimensional fuzzy sets, Simplicial sets, Uncertainty modeling, Granular computing
\end{keyword}

\end{frontmatter}
\section{Introduction}

The mathematical modeling of uncertain information has undergone significant evolution, moving from classical fuzzy sets to more sophisticated structures capable of representing complex cognitive nuances. A foundational advancement in this direction is the development of $n$-dimensional fuzzy sets \cite{bedregal2011, multidimensional, SYL2010}. These sets represent membership values as $n$-tuples of real numbers within the unit interval $[0,1]$ arranged in a non-decreasing order, forming the lattice $L_n$. Geometrically, this lattice is identified with the order polytope $\mathcal{O}(P_n)$ associated with a linear chain \cite{stanley1986}.

Despite the robustness of $n$-dimensional fuzzy sets, the extant literature has seen a proliferation of diverse preference structures, often introduced as distinct extensions. For instance, interval-valued and Atanassov intuitionistic fuzzy sets \cite{ATANASSOV1989343, ATANASSOV_interval, CousoAtanasov} provide a range for pessimistic and optimistic evaluations. Similarly, more recent models such as Basic Uncertain Information (BUI) granules \cite{BUI}, Cognitive Interval Information (CII) granules \cite{HCYL}, and Asymmetric Interval Numbers (AIN) \cite{Salabun} have been proposed to handle specific types of granular uncertainty. Other structures, including fuzzy rough sets \cite{pawlak1982rough, YAO1998227}, shadowed sets \cite{Shadowed}, grey sets \cite{YANG2012249}, and vague sets \cite{vague}, also aim to capture different facets of vagueness. However, these theories are frequently studied in isolation from algebraic or logical perspectives \cite{DESCHRIJVER2003227}, leaving a research gap in the form of a unified geometric and topological framework that connects them.

In this paper, we address this gap by framing these various isomorphisms geometrically, identifying these structures as distinct semantic interpretations of the same topological objects: the $n$-dimensional order polytopes. Our contribution is threefold. First, we provide an extensive revision of classical and contemporary preference structures (such as BUI, CII, AIN, and Picture Fuzzy Sets \cite{Cuong2014}), demonstrating their equivalence to specific $L_n$ of different dimensions. Second, we introduce a new interpretable preference structure that generalizes all the revised approaches whose semantic basis can be constructed using ased on Deck-of-Cards membership functions \cite{DoCMF}. Finally, we establish a simplicial structure for the set of multidimensional fuzzy sets $L_\infty$. By leveraging the formal language of simplicial sets \cite{curtis, munkres} and complexes \cite{kozlov}, we provide a unifying structure that connects these existing models through face and degeneracy maps.

The remainder of this manuscript is organized as follows. Section \ref{sec:prelim} establishes the preliminary concepts regarding multidimensional fuzzy sets and simplicial topology. Section \ref{sec:revision} details the equivalences between various preference structures in the literature and $n$-dimensional fuzzy sets. Section \ref{sec:n-ICUI} presents our new proposal based on Deck-of-Cards membership functions. Section \ref{sec:simplex} describes the simplicial structure of $L_\infty$ and provides examples of how this framework unifies and connects different granularities of information. Finally, the conclusions of the study are summarized in Section \ref{sec:conclusion}.

\section{Preliminary concepts}\label{sec:prelim}
This section introduces the main concepts required to understand the proposal.

\subsection{Multidimensional fuzzy sets}

The concept of $n$-dimensional fuzzy sets extends fuzzy sets to include, among others, interval-valued fuzzy sets and intuitionistic fuzzy sets. In these sets, membership values are represented as $n$-tuples of real numbers within the unit interval $[0,1]$, forming $n$-dimensional intervals arranged in nondecreasing order. The essence of $n$-dimensional fuzzy sets lies in accommodating multiple levels of uncertainty in membership degrees. To do so, Bedregal et al.  \cite{bedregal2011} considered the lattices
\begin{equation*}
L_n=L_n([0,1]):=\{(x_1,..., x_{n})\in [0,1]^n: \,x_1 \le x_2\le ... \le x_n\},
\end{equation*}
for each $n\in\mathbb{N}=\{1,2,3,...\}$, which was called an upper simplex and whose elements are called $n$-dimensional intervals. We will denote by $\pi_i (x)\in [0,1]$ the projection of $x$ over its $i$-th coordinate.

Note that $L_1=[0,1]$, whereas $L_2$ reduces to the lattice of all the closed subintervals of the unit interval $[0, 1]$. The natural partial order on $\Ln$ is given by 
\begin{equation*}
    (x_1,...,x_n)\le^n (y_1,...,y_n)\Leftrightarrow x_i\le y_i, \text{ for all } i=1,...,n.
\end{equation*}
From \cite{SYL2010} we know that $(\Ln,\le^n)$ is a complete lattice, with its greatest and least elements being $(1,...,1)$ and $(0,..., 0)$,  respectively.

If we denote by $[n] = \{1, \dots, n\}$ the linear chain of $n$ elements, then the set $\Ln$ coincides exactly with the set of order-preserving maps from $[n]$ to the unit interval $[0, 1]$. In the language of combinatorial geometry, this identifies $\Ln$ as the order polytope $\mathcal{O}(P_n)$ associated with the chain $P_n$, $1<2\cdots < n$, \cite{stanley1986}.

This geometric perspective provides a structural insight: the $n$-dimensional volume (in Lebesgue's sense) of $\Ln$ is exactly $1/n!$. Compared to the unit volume of the hypercube $[0,1]^n$, this reduction quantifies the information gain obtained by imposing the monotonicity constraint inherent in cognitive uncertainty. Furthermore, the vertices of this polytope correspond to the characteristic functions of the filters of the poset, representing the logical transitions from absolute false to absolute true.

Moreover, we will consider the strict version of $\Ln$ defined by 
\begin{equation*}
L_n^*([0,1)):=\{(x_1,..., x_{n})\in [0,1]^n: \,x_1 < x_2< ... < x_n\}.
\end{equation*}
Any element of $L_n^*([0,1))$ is called a strict $n$-dimensional interval.  There is no doubt that $L_n^*([0,1))\subset \Ln$.

While $\Ln$ is defined by linear inequalities,  it is instructive to analyze its vertex representation.  As the order polytope of a linear chain, $\Ln$ is the convex hull of $n+1$ specific vertices in $\mathbb{R}^n$.
Let $v_k \in \{0,1\}^n$ be the characteristic vectors representing the filters of the chain $[n]$. These vertices are defined as:
$    v_0 = (0, 0, \dots, 0, 0)$, $
    v_1 = (0, 0, \dots, 0, 1)$,
$    v_2 = (0, 0, \dots, 1, 1), $ ...,
$    v_n = (1, 1, \dots, 1, 1).$ 
Geometrically, $\Ln$ is the convex hull of these points:
\begin{equation*}
    \Ln = \text{conv}\{v_0, v_1, \dots, v_n\} = \left\{ \sum_{i=0}^n \lambda_i v_i : \sum_{i=0}^n \lambda_i = 1, \lambda_i \ge 0 \right\}.
\end{equation*}
Since the set of vectors $\{v_0, \dots, v_n\}$ is affinely independent, $\Ln$ constitutes an $n$-dimensional simplex embedded within the unit hypercube $[0,1]^n$. 

Consequently, the strict set $L_n^*([0,1))$ (the non-degenerate elements) corresponds to the relative interior of this simplex, characterized by strict positivity of the barycentric coordinates ($\lambda_i > 0$ for all $i$). Figure \ref{fig:order_polytope} illustrates this construction for dimensions $n=2$ and $n=3$.

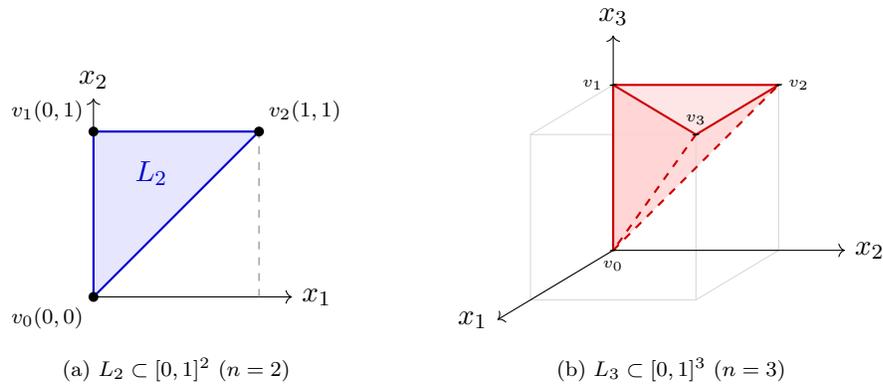
\begin{figure}[htbp]
    \centering
    \begin{subfigure}[b]{0.48\textwidth}
        \centering
        \begin{tikzpicture}[scale=2.2, line join=round]
            \draw[->] (0,0) -- (1.2,0) node[right] {$x_1$};
            \draw[->] (0,0) -- (0,1.2) node[above] {$x_2$};
            \draw[gray, dashed] (1,0) -- (1,1) -- (0,1);
            \fill[blue!10] (0,0) -- (0,1) -- (1,1) -- cycle;
            \draw[thick, blue!80!black] (0,0) -- (0,1) -- (1,1) -- cycle;
            \fill (0,0) circle (0.03); \node[below left, font=\scriptsize] at (0,0) {$v_0(0,0)$};
            \fill (0,1) circle (0.03); \node[above left, font=\scriptsize] at (0,1) {$v_1(0,1)$};
            \fill (1,1) circle (0.03); \node[above right, font=\scriptsize] at (1,1) {$v_2(1,1)$};
            \node[blue!80!black] at (0.35, 0.75) {$L_2$};
        \end{tikzpicture}
        \caption{$L_2 \subset [0,1]^2$ ($n=2$)}
        \label{fig:order_polytope_2d}
    \end{subfigure}
    \hfill 
    \begin{subfigure}[b]{0.48\textwidth}
        \centering
        \begin{tikzpicture}[scale=2.2, line join=round, x={(-0.5cm,-0.3cm)}, y={(1cm,0cm)}, z={(0cm,1cm)}]
            \coordinate (O) at (0,0,0); \coordinate (X) at (1,0,0);
            \coordinate (Y) at (0,1,0); \coordinate (Z) at (0,0,1);
            \coordinate (XY) at (1,1,0); \coordinate (XZ) at (1,0,1);
            \coordinate (YZ) at (0,1,1); \coordinate (XYZ) at (1,1,1);
            \draw[gray!30] (O) -- (X) -- (XY) -- (Y) -- cycle;
            \draw[gray!30] (Z) -- (XZ) -- (XYZ) -- (YZ) -- cycle;
            \draw[gray!30] (O) -- (Z); \draw[gray!30] (X) -- (XZ);
            \draw[gray!30] (Y) -- (YZ); \draw[gray!30] (XY) -- (XYZ);
            \draw[->] (O) -- (1.4,0,0) node[left] {$x_1$};
            \draw[->] (O) -- (0,1.4,0) node[right] {$x_2$};
            \draw[->] (O) -- (0,0,1.3) node[above] {$x_3$};
            \coordinate (v0) at (0,0,0); \coordinate (v1) at (0,0,1);
            \coordinate (v2) at (0,1,1); \coordinate (v3) at (1,1,1);
            \fill[red!10, opacity=0.7] (v0) -- (v1) -- (v3) -- cycle; 
            \fill[red!20, opacity=0.7] (v0) -- (v1) -- (v2) -- cycle; 
            \fill[red!5, opacity=0.5] (v1) -- (v2) -- (v3) -- cycle;
            \draw[thick, red!80!black] (v0) -- (v1);
            \draw[thick, red!80!black] (v1) -- (v2);
            \draw[thick, red!80!black] (v2) -- (v3);
            \draw[thick, red!80!black, dashed] (v3) -- (v0); 
            \draw[thick, red!80!black, dashed] (v0) -- (v2);
            \draw[thick, red!80!black] (v1) -- (v3);
            \fill (v0) circle (0.02); \node[below, font=\tiny] at (v0) {$v_0$};
            \fill (v1) circle (0.02); \node[left, font=\tiny] at (v1) {$v_1$};
            \fill (v2) circle (0.02); \node[right, font=\tiny] at (v2) {$v_2$};
            \fill (v3) circle (0.02); \node[above, font=\tiny] at (v3) {$v_3$};
        \end{tikzpicture}
        \caption{$L_3 \subset [0,1]^3$ ($n=3$)}
        \label{fig:order_polytope_3d}
    \end{subfigure}

    \caption{Visualization of the Order Polytope for $n=2$ and $n=3$, showing the simplex boundaries within the unit hypercube.}
    \label{fig:order_polytope}
\end{figure}

Using the polytope $\Ln$ as a basis, \cite{SYL2010} introduced the notion of $n$-dimensional fuzzy sets on a non-empty universe of discourse $X$.

\begin{defi}[\cite{SYL2010}]
Let $n\in\mathbb{N}$. An $n$-dimensional
fuzzy set $A$ over $X$ is a map $A:X\to \Ln$ given by
\begin{equation*}
    A(x) = (A_1 (x), . . . ,{A_n} (x)),
\end{equation*}
with $A_1 (x)\leq. . . \leq A_n (x)$, for every $x\in X$. In addition, each  fuzzy set ${A_i}:X\to [0,1]$ is called the $i$-th projection of $A$. 
\end{defi}

Since $L_1= [0,1]$, a 1-dimensional fuzzy set is just a classical fuzzy set.  Similarly, a 2-dimensional fuzzy set is equivalent to an interval fuzzy set (also equivalent to an intuitionistic fuzzy set), since
$L_2$ reduces to the usual lattice $\mathbb{I}$ of all the closed subintervals of the unit interval $[0,1]$. Extending this idea, in \cite{multidimensional}, the authors considered the set
\begin{equation*}
    {L}_\infty={L}_\infty([0,1]):=\bigcup_{n\ge 1} \Ln
\end{equation*}
in which every element can be seen as an $n$-dimensional interval in $\Ln$, for some $n\in \mathbb{N}$. Using this set as a codomain, we obtain multidimensional fuzzy sets on the non-empty universe of discourse $X$.

\begin{defi}[\cite{multidimensional}] A multidimensional fuzzy set $A$ over $X$ is a pair of maps $(p,A):X\to \mathbb{N}\times \mathcal{L}_\infty([0,1))$ such that
$$A(x) = {( {A_1} (x), . . . , {A_{p(x)}} (x)) \in L_{p(x)}([0,1]),}$$
for every $x\in X$.
\end{defi}

\subsection{Simplicial sets}

A simplicial complex (simplicial set) is a fundamental concept in topology \cite{curtis, munkres} that has gained significant importance in the study of data. Unlike graph models, which primarily focus on pair-wise interactions, simplicial complexes offer a richer framework for characterizing complex connections within or between datasets \cite{kozlov}. They can be constructed computationally from various data types, including point clouds, matrices, volumetric functions, networks, and graphs, making them essential tools in data science and computer science.

A simplicial set  $X_\bullet$ can be thought of as a way to represent a space by piecing together $n$-simplices in a combinatorial manner. Geometrically, a 0-simplex represents a point, a 1-simplex represents a line segment, a 2-simplex represents a triangle, a 3-simplex represents a tetrahedron, and so forth. An n-simplex can be conceptualized as a polyhedron formed by the convex hull of $n+1$ geometrically independent points (i.e., they do not lie in any hyperplane of dimension $n$) within the Euclidean space $\mathbb{R}^n$. Formally, 

\begin{defi}
A simplicial set $X_\bullet$ consists of a sequence of sets $\{X_n\},$ for $n\ge 0$, together with the face maps $d_i:X_n\to X_{n-1}$, $0\leq i\leq n$, and the degeneracy maps $s_j:X_n \to X_{n+1}$, $0\leq j\leq n$ satisfying the following {\it simplicial identities}:
\begin{equation*}
\begin{aligned}
d_i d_j &= d_{j-1} d_i  \;\;\;\; \text{ if } i<j \\
s_i s_j &= s_j s_{i-1} \vspace{2cm} \;\;\;\;\text{ if } i>j \\
d_i s_j &= \left\lbrace 
\begin{array}{ll}
s_{j-1} d_i & \text{ if } i<j \\
id  & \text{ if } i=j,\,j+1 \\
s_jd_{i-1} & \text{ if } i>j+1.
\end{array}
\right. 
\end{aligned}
\end{equation*}
If all $X_i$ are lattices and $d_i$ and $s_j$ are lattice homomorphisms, the simplicial set $X_\bullet$ is a simplicial lattice. Elements of the set $X_n$ are called $n$-simplices.
\end{defi}

Often, a simplicial set is written as
\[ \begin{tikzcd}
\cdots X_3  \rar[shift left=1.5] \rar[shift left=4.5] \rar[shift right=1.5] \rar[shift right=4.5]  & 
X_2
\rar[shift left=3,"d_2"] \rar[shift right=3,"d_0"'] \rar
\lar[shift left=3] \lar[shift right=3] \lar & 
X_1 \rar[shift left=2, "d_1"] \rar[shift right=2, "d_0"']  \lar[shift left=1.5] \lar[shift right=1.5] 
&
X_0. \lar["s_0" description]
\end{tikzcd} \]

In algebraic topology, the topological $n$-simplex $\Delta_n$ consists of the points $(w_0,w_1,\cdots,w_n)$ in $[0,1]^{n+1}$ such that $\sum_{i=0}^n w_i=1$. Such a simplex (usually related to weighting vectors) has $(n+1)$-faces. Intuitively, a simplicial set is a collection of  $n$-simplices representing $\Delta_n$  together with face maps, indicating how to glue simplices, and degeneracies.

In the case that we have only face maps satisfying $d_i d_j = d_{j-1} d_i$,  for  $i<j$, we call $X_\bullet$ a semisimplicial set (i.e., without degeneracy maps).
Semi-simplicial sets are simply a generalisation of simplicial sets.  

\section{Some preference structures that are equivalent to $n$-dimensional fuzzy sets}\label{sec:revision}

This section continues the work in \cite{CousoAtanasov} to show that many of the preference structures in the extant literature are nothing but $n$-dimensional fuzzy sets. For instance, it is well-known that fuzzy sets are $1$-dimensional fuzzy sets. In the same way, we present other equivalences between other recently-proposed structures and the classical $n$-dimensional fuzzy sets.

\subsection{2-dimensional simplices: $L_2$.}

First, let us recall that the set $\ci$ of all intervals of the form $[a,b]\subseteq [0,1]$ is equivalent to $L_2$. 
Intervals are utilized for modeling many decision-making problems, keeping in mind that the length $b-a$ means the uncertainty. The interval $[a,b]$ gives the range between the pessimistic and the optimistic evaluation.

Given a universe of discourse $X$ (we can think of alternatives for a decision problem), let us denote the set of maps from $X$ to the order polytope $L_2$ by $\mathcal{F}_{L_2}(X)$. It is clear that $\mathcal{F}_{L_2}(X)$, i.e., the set of $2$-dimensional fuzzy sets, is indeed the set of the interval-valued fuzzy sets, which is equivalent to the  Atanassov intuitionistic fuzzy sets via the isomorphism between $L_2$ and the set $\mathbb{A}_2=\{(a,b)\in [0,1]^2: a+b\le 1\}$.

\begin{prop} The following sets are isomorphic lattices.
a) the set $L_2$; b) $\mathbb{I}$ and c) $\mathbb{A}_2$. 
\end{prop}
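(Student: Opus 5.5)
The plan is to exhibit explicit order isomorphisms between the three sets. Since a bijection $f$ between lattices with $x\le y \iff f(x)\le f(y)$ automatically preserves all meets and joins, each bijection will be a lattice isomorphism. The orders are: on $L_2$, the componentwise order $\le^2$; on $\ci$, the interval order $[a,b]\le[c,d]$ iff $a\le c$ and $b\le d$; on $\mathbb{A}_2$, the usual intuitionistic order $(a,b)\preceq(c,d)$ iff $a\le c$ and $b\ge d$. The last one is the standard choice in the literature (membership grows, non-membership decreases), and I will state it explicitly, since the componentwise order on $\mathbb{A}_2$ would not make the statement true.

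First, for $L_2\cong\ci$, define $\varphi(x_1,x_2)=[x_1,x_2]$. It is well defined because $0\le x_1\le x_2\le 1$. Its inverse is $[a,b]\mapsto(a,b)$. It preserves and reflects the order, because both orders are componentwise on the endpoints.

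Second, for $L_2\cong\mathbb{A}_2$, define $\psi(x_1,x_2)=(x_1,1-x_2)$.
\begin{itemize}
\item It is well defined: $x_1+(1-x_2)\le 1$ holds exactly because $x_1\le x_2$, and both coordinates lie in $[0,1]$.
\item Its inverse is $(a,b)\mapsto(a,1-b)$. This lands in $L_2$ because $a\le 1-b$ is equivalent to $a+b\le 1$.
\item Order preservation and reflection: $(x_1,x_2)\le^2(y_1,y_2)$ iff $x_1\le y_1$ and $1-x_2\ge 1-y_2$, which is precisely $\psi(x)\preceq\psi(y)$.
\end{itemize}

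Composing $\psi\circ\varphi^{-1}$ then gives $\ci\cong\mathbb{A}_2$. If desired, I can also write the lattice operations explicitly, for instance $\psi(x\vee y)=(\max(x_1,y_1),\min(1-x_2,1-y_2))=\psi(x)\vee\psi(y)$, as a sanity check.

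The only real obstacle is fixing the correct order on $\mathbb{A}_2$, that is, the reversal in the second coordinate. Once that convention is stated, every step is a direct verification.
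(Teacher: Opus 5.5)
Your proof is correct and uses the same bijection as the paper, $(x_1,x_2)\mapsto(x_1,1-x_2)$ with inverse $(a,b)\mapsto(a,1-b)$, justified by the same equivalence $x_1\le x_2 \iff x_1+(1-x_2)\le 1$. The paper's proof only checks that these maps are well defined, so your explicit choice of order on $\mathbb{A}_2$ (reversed in the second coordinate) and your check that $\psi$ is an order isomorphism supply the lattice part of the claim, which the paper leaves implicit.
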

\begin{proof}
Given $(x_1,x_2)\in L_2$, $0\le x_1\le x_2\le 1$. Since  $0 \le 1-x_2\le 1-x_1 $, we get that
$0\le x_1+(1-x_2)\le x_1+(1-x_1)=1$, i.e. $(x_1,1-x_2)\in \mathbb{A}_2$. Conversely, given $(a,b)\in \mathbb{A}_2$, $a+b\le 1$ and $a\le 1-b$, i.e., $(a,1-b)\in L_2$.
\end{proof}

This issue has been studied in the literature \cite{ATANASSOV1989343,CousoAtanasov}. However, interval-valued and intuitionistic fuzzy sets are not the only preference structures that are isomorphic. Below, we provide an extensive compendium of other examples.

\begin{defi} \cite{BUI} A basic uncertain information (BUI) granule is a pair $(x,c)\in [0,1]\times [0,1]$  in which $x$ is the evaluation value and $c$ is the certainty degree of $x$.
\end{defi}

We will denote by $BUI$ the set of all BUI granules and by $BUI^*$ the subset of $BUI$ with elements $(x,c)$ such that $c>0$.

\begin{prop}\label{prop:BUI_Atan} The following sets are bijective:
a) the set $BUI^*$; b)  the set $L_2\backslash \{(0,1)\} $; c) $\mathbb{I}\backslash \{[0,1]\} $ and d) $\mathbb{A}_2\backslash \{(0,0)\} $. 
\end{prop}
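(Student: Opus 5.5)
We need an explicit bijection between $BUI^*$ and $L_2\setminus\{(0,1)\}$; the other two equivalences then follow by restricting the isomorphisms of the previous Proposition. The natural choice, following the usual interpretation of BUI granules in \cite{BUI}, reads a granule $(x,c)$ as the interval in which the certain part $c$ is assigned to the value $x$ and the uncertain part $1-c$ is spread over all of $[0,1]$. This suggests
\begin{equation*}
\phi(x,c)=\bigl(cx,\; cx+1-c\bigr).
\end{equation*}
The first step is to check that $\phi$ lands in the right set. Since $c\in(0,1]$ and $x\in[0,1]$, we have $0\le cx\le cx+1-c\le c+1-c=1$, so $\phi(x,c)\in L_2$. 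Moreover, $\phi(x,c)=(0,1)$ would force $cx=0$ and $1-c=1$, that is $c=0$, which is excluded in $BUI^*$.

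The second step is to construct the inverse. Given $(a,b)\in L_2\setminus\{(0,1)\}$, set $c=1-(b-a)$, so that the width $b-a$ of the interval measures the lack of certainty. Because $(a,b)\neq(0,1)$ and $0\le a\le b\le 1$, we get $b-a<1$ and hence $c>0$. Then put $x=a/c$. One must check that $x\le 1$, which is equivalent to $a\le 1-b+a$, i.e. $b\le 1$; this holds. A direct substitution then shows that the two maps are mutually inverse. This part is routine algebra.

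The main (mild) obstacle is exactly here: pinning down that the excluded point $(0,1)$ is precisely where $c=0$, and that $x=a/c$ stays in $[0,1]$. Note that when $c=0$ all granules $(x,0)$ collapse to the single interval $[0,1]$, which explains why the removed points correspond.

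Finally, compose with the lattice isomorphisms $L_2\cong\mathbb{I}$, $(x_1,x_2)\mapsto[x_1,x_2]$, and $L_2\cong\mathbb{A}_2$, $(x_1,x_2)\mapsto(x_1,1-x_2)$, from the previous Proposition. Under these maps the point $(0,1)$ goes to $[0,1]$ and to $(0,0)$, respectively. Restricting them to the complements therefore gives bijections with $\mathbb{I}\setminus\{[0,1]\}$ and $\mathbb{A}_2\setminus\{(0,0)\}$, which completes the proof.
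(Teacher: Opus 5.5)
Your proposal is correct and follows essentially the same proof as the paper: the same dilation map $(x,c)\mapsto(cx,\,cx+1-c)$, whose inverse is $c=x_1+1-x_2$, $x=x_1/c$. You also state explicitly that the correspondences with $\mathbb{I}\setminus\{[0,1]\}$ and $\mathbb{A}_2\setminus\{(0,0)\}$ follow by restricting the isomorphisms of the previous Proposition, a step the paper leaves implicit.
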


\begin{proof}
We can  transform  a BUI granule $(x,c)$ into an interval $I_{(x,c)}\in L_2$ by the certainty/uncertainty dilatation:
$$I_{(x,c)}=[x-(1-c)x,x+(1-c)(1-x)]=[cx,cx+1-c].$$
Since $c\neq 0$,  $(cx,cx+1-c)\in L_2\backslash \{(0,1)\} $.
Conversely, given $(x_1,x_2)\in L_2$ with $(x_1,x_2)\neq (0,1)$, we can obtain a BUI granule $(x,c)$ with $c=x_1+1-x_2\in (0,1]$ and $x=\frac{x_1}{x_1+1-x_2}\in [0,1]$. 
\end{proof}

From the previous result, one could induce a lattice structure on the set of BUI granules:
\begin{gather*}
    (x_1,c_1)\le_{BUI} (x_2,c_2)\iff I_{(x_1,c_1)}\le I_{(x_2,c_2)} \\\iff c_1x_1\le c_2x_2\text{ and }c_1(x_1-1)\le c_2(1-c_2).
\end{gather*}
However, in our opinion, the partial order obtained is not entirely natural. In addition, note that other preference structures based on BUI are also within the scope of this discussion \cite{STANDR-BUI}.

Additionally, the set-theoretic equivalence between axiomatic fuzzy rough sets (pairs of lower/upper approximations) and interval-valued Fuzzy sets is well-documented in the literature \cite{DESCHRIJVER2003227,YAO1998227}. However, these equivalences have traditionally been studied from algebraic (lattice-theoretic) or logical perspectives. On the contrary, the goal of our contribution is to frame these isomorphisms geometrically: we identify all these structures as distinct semantic interpretations of the same topological object: the 2-dimensional order polytope $L_2$.

\begin{defi}[Fuzzy Rough Set \cite{pawlak1982rough}]
Let $X$ be a universe of discourse. A Fuzzy Rough Set $\mathcal{A}$ in $X$ is characterized by a pair of fuzzy sets, the lower approximation $ {\underline{A}}$ and the upper approximation $ {\overline{A}}$, satisfying the condition:
\begin{equation*}
    {\underline{A}}(x) \le  {\overline{A}}(x), \quad \forall x \in X
\end{equation*}
where $ {\underline{A}},  {\overline{A}}: X \to [0,1]$. The pair $( {\underline{A}}(x),  {\overline{A}}(x))$ describes the granular uncertainty of an element $x\in X$.
\end{defi}
Of course, we obtain the following result.
\begin{theo}[Rough Set Representation]
The space of all fuzzy rough sets on a universe $X$ is isomorphic to the space of 2-dimensional fuzzy sets $\mathcal{F}_{L_2}(X)$.
\end{theo}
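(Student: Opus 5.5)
The plan is to write down the obvious correspondence and check that it is a bijection which preserves and reflects the order. Let $\mathcal{R}(X)$ denote the set of fuzzy rough sets on $X$, i.e.\ pairs $(\underline{A},\overline{A})$ of maps $X\to[0,1]$ with $\underline{A}(x)\le\overline{A}(x)$ for all $x\in X$. I would define
\[
\Phi:\mathcal{R}(X)\to\mathcal{F}_{L_2}(X),\qquad \Phi(\underline{A},\overline{A})(x)=(\underline{A}(x),\overline{A}(x)).
\]
The defining condition of a fuzzy rough set is exactly the monotonicity constraint $x_1\le x_2$ of $L_2$. Hence $\Phi(\underline{A},\overline{A})(x)\in L_2$ for every $x$, and $\Phi$ is well defined.

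For the inverse I would use the projections $\pi_1,\pi_2$ introduced in the preliminaries. Given $A\in\mathcal{F}_{L_2}(X)$, set $\Psi(A)=(A_1,A_2)=(\pi_1\circ A,\pi_2\circ A)$. Since $A(x)\in L_2$, we have $A_1(x)\le A_2(x)$, so $\Psi(A)\in\mathcal{R}(X)$. The identities $\Psi\circ\Phi=\mathrm{id}$ and $\Phi\circ\Psi=\mathrm{id}$ follow immediately because a point of $L_2$ is determined by its two coordinates. This gives the set-theoretic bijection.

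To justify the word \emph{isomorphic}, I would fix the lattice structures. On $\mathcal{F}_{L_2}(X)$ take the pointwise order induced by $\le^2$. On $\mathcal{R}(X)$ take the componentwise pointwise order: $(\underline{A},\overline{A})\le(\underline{B},\overline{B})$ iff $\underline{A}\le\underline{B}$ and $\overline{A}\le\overline{B}$ pointwise. Unwinding the definition of $\le^2$ shows that $\Phi$ preserves and reflects the order. An order isomorphism between lattices is automatically a lattice isomorphism, so $\Phi$ also preserves the pointwise meets and joins. Concretely, these are $(\min,\min)$ and $(\max,\max)$, which stay inside $L_2$ because $\min$ and $\max$ are monotone.

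I do not expect any genuine obstacle. The only point needing care is to state which structure ``isomorphic'' refers to. I would settle it by choosing the pointwise orders above, inherited from the lattice $(L_2,\le^2)$ of \cite{SYL2010}. Optionally, one can compose $\Phi$ with the earlier Proposition identifying $L_2$, $\mathbb{I}$ and $\mathbb{A}_2$. This also identifies fuzzy rough sets with interval-valued and intuitionistic fuzzy sets.
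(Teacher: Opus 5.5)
Your proposal is correct and follows the paper's proof essentially verbatim. It uses the same pointwise map $\Phi(\underline{A},\overline{A})(x)=(\underline{A}(x),\overline{A}(x))$, well-definedness from $\underline{A}\le\overline{A}$, bijectivity, and the identification of the rough-set inclusion order with the componentwise order on $L_2$; you are only more explicit than the paper in writing the inverse via the projections and noting that the order isomorphism also preserves pointwise meets and joins.
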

\begin{proof}
Let us recall that $L_2$ is defined as the simplex $\{(x_1, x_2) \in [0,1]^2 : x_1 \le x_2\}$.
We construct the isomorphism $\Phi$ by assigning each fuzzy rough set $\mathcal{A} = ( {\underline{A}},  {\overline{A}})$, the element of $\mathcal{F}_{L_2}(X)$ determined by 
\begin{equation*}
    \Phi(\mathcal{A})(x) = ( {\underline{A}}(x),  {\overline{A}}(x))\pt x\in X.
\end{equation*}
By the definition of rough approximations, $ {\underline{A}}(x) \le  {\overline{A}}(x)$, which satisfies exactly the defining condition of $L_2$. Thus, $\Phi(\mathcal{A})(x)$ lies in $L_2$ and the mapping is well-defined. In addition, the mapping $\Phi$ is trivially bijective. Any pair $(u, v) \in L_2$ defines a valid fuzzy rough membership at a point, where $u$ is the necessary membership (lower) and $v$ is the possible membership (upper). In addition, note that the standard inclusion order for rough sets is given by $\mathcal{A} \subseteq \mathcal{B} \iff  {\underline{A}} \le  {\underline{B}} \text{ and }  {\overline{A}} \le  {\overline{B}}$. This corresponds exactly to the component-wise partial order defined on $L_2$.
\end{proof}

This bijection allows us to interpret concepts from granular computing geometrically within the simplex $L_2$. From this point of view: (i) Classical sets correspond to the diagonal $D = \{(x,x): x \in \{0,1\}\} \subset L_2( [0,1])$; (ii) {Extreme Rough Sets}, also knwon in the literature as Shadowed sets \cite{Shadowed}, correspond to maps into the vertices of $L_2$; and (iii) the measure of ``roughness'' of a set, typically defined as $ {\overline{A}}(x) -  {\underline{A}}(x)$, corresponds precisely to the {length} of the interval represented by the point in $L_2$, or its distance from the main diagonal (certainty). Thus, $L_2$ constitutes the {topological skeleton} of rough set theory.

Following a similar discussion, we can assess the following result.
\begin{theo}[Structural Equivalence of $L_2$-Fuzzy Sets]\label{L2isom}
    The following theories are equivalent to $L_2$-fuzzy sets: (i) 2-dimensional fuzzy sets \cite{bedregal2011},  (ii) intuitionistic fuzzy sets \cite{ATANASSOV1989343}, (iii) interval type 2 fuzzy sets \cite{CousoAtanasov}; (iv) grey sets \cite{YANG2012249}; (v) fuzzy rough sets \cite{pawlak1982rough}; and (vi) vague sets \cite{vague}.
\end{theo}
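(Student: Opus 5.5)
The plan is to reduce every item to a pointwise statement. Each theory listed in Theorem \ref{L2isom} assigns to each $x\in X$ a membership value in some fixed set $V$, and the family of such sets is $V^X$. It therefore suffices, for each theory, to exhibit an order isomorphism (or at least a bijection) $\varphi_V:V\to L_2$. Composing pointwise, $A\mapsto \varphi_V\circ A$, then gives a bijection onto $\mathcal{F}_{L_2}(X)$. Because the order on $\mathcal{F}_{L_2}(X)$ is pointwise, this bijection is a lattice isomorphism whenever $\varphi_V$ is one. This mirrors the argument already used for the Rough Set Representation theorem.

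I will go through the items in order.
\begin{itemize}
\item[(i)] This holds by definition, since $2$-dimensional fuzzy sets are exactly maps $X\to L_2$.
\item[(ii)] For intuitionistic fuzzy sets the membership/non-membership pair $(\mu,\nu)$ lies in $\mathbb{A}_2$. The first Proposition of this section gives the isomorphism $(\mu,\nu)\mapsto(\mu,1-\nu)$ with $L_2$. Note that the intuitionistic order (increase $\mu$, decrease $\nu$) corresponds to the componentwise order on $L_2$.
\item[(iii)] Interval type 2 fuzzy sets, in the sense of \cite{CousoAtanasov}, are determined by a lower and an upper membership function with $\underline{\mu}\le\overline{\mu}$. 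The corresponding pair is an element of $\mathbb{I}\cong L_2$, again by that Proposition.
\item[(iv)] Grey sets are given by lower and upper characteristic functions in $[0,1]$ with lower $\le$ upper, that is, by grey intervals. They are treated exactly as in (iii).
\item[(v)] This is the Rough Set Representation theorem already proved.
\item[(vi)] Vague sets are given by a truth membership $t$ and a false membership $f$ with $t+f\le1$, so $(t,f)\in\mathbb{A}_2$. They are sent to $(t,1-f)\in L_2$, as in (ii).
\end{itemize}

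In each case I will check three things: that the pointwise map lands in $L_2$, that it is invertible, and that the native inclusion order of the theory is carried to $\le^2$. The argument is then finished by noting that these pointwise isomorphisms lift to the function spaces.

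The main obstacle is not computational but definitional. One must fix, for items (iii), (iv) and (vi), the precise form of the definitions in the cited references, in particular the membership ranges and the orders used there, so that the identification is exact rather than up to restriction. For instance, for interval type 2 fuzzy sets I would first restrict to the version in which the footprint of uncertainty at each point is a closed interval with unit secondary grades, as done in \cite{CousoAtanasov}. I would also state explicitly that "equivalent" is meant as a lattice isomorphism of the membership codomains.
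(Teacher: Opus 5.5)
Your proposal is correct and takes essentially the paper's route: the paper gives no separate proof, only saying the result follows ``following a similar discussion'', i.e., by identifying membership values pointwise with $L_2$ through the proposition $L_2\cong\mathbb{I}\cong\mathbb{A}_2$ and the Rough Set Representation theorem. Your version is more careful than the paper's in two places: you fix the intuitionistic order on $\mathbb{A}_2$, under which $(\mu,\nu)\mapsto(\mu,1-\nu)$ is genuinely a lattice isomorphism, and you make explicit the restriction to interval type-2 sets whose primary memberships $J_x$ are closed intervals, without which item (iii) is not a bijection.
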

Note that also BUI granules may be embedded within this structure, although they are not bijective \cite{BUI}.

\subsection{3-dimensional simplices: $L_3$.}
Now, let us identify some of the structures of the literature with  $L_3$ fuzzy sets. Given a universe of discourse $X$, let us denote the set of maps from $X$ to the order polytope $L_3$ by $\mathcal{F}_{L_3}(X)$. Let us consider the following structures:

\begin{defi} \cite{HCYL} A cognitive interval information (CII) granule is a the pair $(x,[a^-,a^+])\in [0,1]\times \mathbb{I}$  such that 
$x\in [a^-,a^+]$ is the evaluation value and $[a^-,a^+]$ is called the acceptance interval of $x$. 
\end{defi}
 
\begin{defi}[\cite{Salabun}] An Asymmetric Interval Number (AIN) is a pair represented as $[a^-,a^+]_x$  such that $x\in [a^-,a^+]$ is the expected value and $[a^-,a^+]$ is an interval contained in $[0,1]$.
\end{defi}

Additionally, picture fuzzy sets \cite{Cuong2014} are based on the lattice
$$\mathbb{A}_3=\{(a_1,a_2,a_3)\in \mathbb{I}^3: a_1+a_2+a_3\le 1\}.$$

\begin{prop}\label{prop:L3isom} The following sets are bijective: i) the set $L_3$; ii) the set of CII granules; iii) the set of AINs, and iv) the set $\mathbb{A}_3$. 
\end{prop}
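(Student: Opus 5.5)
I will prove the statement by exhibiting explicit bijections between $L_3$ and each of the other three sets, always going through $L_3$ as the hub. For the CII granules and the AINs, both objects are pairs $(x,[a^-,a^+])$ with $0\le a^-\le x\le a^+\le 1$; they differ only in notation and semantics. The natural map is therefore the reordering
\[
(x,[a^-,a^+])\longmapsto (a^-,x,a^+)\in L_3 ,
\]
and the analogous map $[a^-,a^+]_x\mapsto(a^-,x,a^+)$ handles AINs. The constraint $x\in[a^-,a^+]$ is exactly the chain condition $x_1\le x_2\le x_3$. The inverse sends $(x_1,x_2,x_3)\in L_3$ to $(x_2,[x_1,x_3])$, respectively $[x_1,x_3]_{x_2}$. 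Both compositions are the identity by inspection, so these two parts are immediate.

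For $\mathbb{A}_3$ I will imitate the proof of the isomorphism between $L_2$ and $\mathbb{A}_2$, that is, use consecutive differences (the "gaps" of the chain). Define
\[
\Psi(x_1,x_2,x_3)=(x_1,\;x_2-x_1,\;1-x_3).
\]
In the picture-fuzzy reading, these coordinates are the positive membership, the neutral membership and the negative membership. Each coordinate lies in $[0,1]$ by monotonicity. Their sum is $1-(x_3-x_2)\le 1$, so $\Psi$ lands in $\mathbb{A}_3$. I read $\mathbb{A}_3$ as triples of numbers in $[0,1]$, interpreting the paper's $\mathbb{I}^3$ as $[0,1]^3$, as is standard for picture fuzzy sets.

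The inverse is $(a_1,a_2,a_3)\mapsto(a_1,\,a_1+a_2,\,1-a_3)$. For this to land in $L_3$ I need to check three things:
\begin{itemize}
\item $a_1\le a_1+a_2$, which holds since $a_2\ge 0$;
\item $a_1+a_2\le 1-a_3$, which is exactly $a_1+a_2+a_3\le 1$;
\item all three values lie in $[0,1]$.
\end{itemize}
Checking that the two compositions are identities is routine arithmetic. Equivalently, $\Psi$ is the restriction of an invertible affine map of $\mathbb{R}^3$ that sends the simplex $L_3=\mathrm{conv}\{v_0,\dots,v_3\}$ onto the simplex $\mathbb{A}_3$. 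This fits the paper's geometric viewpoint, so I will remark on it.

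The only step needing care is the $\mathbb{A}_3$ case. The natural coordinate choice must match the defining inequality: $a_2=x_2-x_1$ rather than $a_2=x_2$. The ambiguity in the notation $\mathbb{I}^3$ also has to be resolved. Everything else is bookkeeping. Finally, because bijections compose, the three bijections through $L_3$ give pairwise bijections among all four sets.
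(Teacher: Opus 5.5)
Your proposal is correct. For CII granules and AINs it is exactly the paper's argument: the paper's entire proof is the single observation that $x\in[a^-,a^+]$ gives $(a^-,x,a^+)\in L_3$. That proof never writes down an inverse and says nothing at all about $\mathbb{A}_3$. So your explicit map $\Psi(x_1,x_2,x_3)=(x_1,x_2-x_1,1-x_3)$, with its inverse, supplies a part the paper omits. So does your reading of $\mathbb{I}^3$ as $[0,1]^3$, which is genuinely needed, since $\mathbb{I}$ was defined as the set of closed subintervals of $[0,1]$.

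The only tool the paper offers for this part is the partial-sum map $\psi(a_1,a_2,a_3)=(a_1,a_1+a_2,a_1+a_2+a_3)$ from its later theorem relating $L_n$, $\mathbb{A}_n$ and $\Delta_{n+1}$. Its inverse is $(x_1,x_2-x_1,x_3-x_2)$. Your $\Psi$ is $\psi^{-1}$ followed by the affine involution $(a_1,a_2,a_3)\mapsto(a_1,a_2,1-a_1-a_2-a_3)$ of $\mathbb{A}_3$. In picture-fuzzy terms, this swaps which of the two gaps $x_3-x_2$ and $1-x_3$ is called the negative degree and which the refusal degree.

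Each choice has its own advantage. The paper's $\psi$ is uniform in $n$ and is the one used later in the PLTS example. Yours extends the paper's own $L_2\leftrightarrow\mathbb{A}_2$ map $(x_1,x_2)\mapsto(x_1,1-x_2)$. Under $\Psi$, the degeneracy $s_0(x_1,x_2)=(x_1,x_1,x_2)$ is sent to $(x_1,0,1-x_2)$. This is the standard embedding of intuitionistic values into picture fuzzy values with zero neutral degree, so your map fits the positive/neutral/negative semantics of picture fuzzy sets more naturally.
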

\begin{proof}
Since $x\in [a^-,a^+]$, then $a^-\le x\le a^+$ and $(a^-,x,a^+)\in L_3$. 
\end{proof}
Note that Pythagorean \cite{Pythagorean} fuzzy sets are also bijective to this kind of structure.

Since both a CII granule and an element in $L_3$ are uniquely determined by an ordered triad, it is easy to define a complete lattice structure on the first set that is isomorphic to the second; similarly for the AINs set. Inspired by \cite{Salabun}, we can define an asymmetry coefficient $A$ for the values $(x_1,x_2,x_3)\in L_3$ as 
\begin{equation*}
A_{(x_1,x_2,x_3)}=
\begin{cases}
\frac{x_1+x_3-2x_2}{x_1+x_3},& \text{ if } x_1\neq x_3\\
0,& \text{ if } x_1=x_2=x_3.
\end{cases}
\end{equation*}
 Let us summarize the previous results.
 \begin{theo}[Structural Equivalence of $L_3$-Fuzzy Sets]
The following preference structures are mathematically equivalent to the lattice of 3-dimensional fuzzy sets $\mathcal{F}_{L_3}(X)$: (i) 3-dimensional fuzzy sets \cite{SYL2010}, (ii) CII granules on a set $X$ \cite{HCYL}, (iii) AINS on a set $X$ \cite{Salabun}, (iv) Picture Fuzzy Sets \cite{Cuong2014}, and (v) Triangular Fuzzy Sets \cite{METRIC}.
\end{theo}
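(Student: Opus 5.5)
The plan is to prove the theorem pointwise. Every structure listed is a family of maps from $X$ into some set of "membership values". So it suffices to give, for each structure, a bijection between its value set and $L_3$ that is compatible with the chosen order. Composing a map $A:X\to L_3$ with such a bijection then gives a bijection of function spaces $\mathcal{F}_{L_3}(X)\cong\mathcal{F}_{\mathcal{S}}(X)$. Since orders on fuzzy sets are defined pointwise, the function spaces are lattice isomorphic whenever the value sets are, exactly as in the proof of the Rough Set Representation theorem.

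Case (i) is tautological. For (ii) and (iii) I invoke Proposition \ref{prop:L3isom}. The maps $(x,[a^-,a^+])\mapsto(a^-,x,a^+)$ and $[a^-,a^+]_x\mapsto(a^-,x,a^+)$ are well defined because $a^-\le x\le a^+$. They are bijective, with inverse $(x_1,x_2,x_3)\mapsto(x_2,[x_1,x_3])$. As remarked after that proposition, I transport the componentwise order $\le^3$ to CII granules and AINs, which makes these bijections lattice isomorphisms by construction.

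For (iv) I need an explicit map $L_3\to\mathbb{A}_3$. Reading $\mathbb{A}_3$ as triples in $[0,1]^3$ with $a_1+a_2+a_3\le 1$, I take the "gap" coordinates
\[
\phi(x_1,x_2,x_3)=(x_1,\;x_2-x_1,\;1-x_3).
\]
The entries are nonnegative, and their sum is $1-(x_3-x_2)\le 1$. The inverse is $(a_1,a_2,a_3)\mapsto(a_1,a_1+a_2,1-a_3)$, which is monotone because $a_1+a_2\le 1-a_3$. This is the barycentric-coordinate description of the simplex $L_3=\mathrm{conv}\{v_0,\dots,v_3\}$: up to reordering, the gaps $x_1,\,x_2-x_1,\,x_3-x_2,\,1-x_3$ are the weights $\lambda_i$, so $\mathbb{A}_3$ just forgets one weight. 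It extends the map $L_2\to\mathbb{A}_2$ used in the first proposition.

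For the order, I define the lattice order on $\mathbb{A}_3$ as the one transported by $\phi$. This is an honest choice: the standard picture-fuzzy order (increase positive membership, decrease negative membership) only partially matches. The theorem claims structural equivalence, so I state it as bijection plus induced complete lattice structure, as the paper did for BUI.

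For (v), a triangular fuzzy number on $[0,1]$ is determined by its support endpoints and peak $(l,m,u)$ with $l\le m\le u$. This is exactly an element of $L_3$, and the bijection is immediate.

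The main obstacle is (iv), together with being honest about "lattice equivalence" versus bijection. I would handle it by presenting $\phi$ as an affine bijection of simplices, which is therefore also a homeomorphism, and by explicitly inducing the order, rather than claiming that it is a lattice isomorphism for the native orders.
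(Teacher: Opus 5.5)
Your proposal is correct and follows the paper's route. The theorem is a pointwise consequence of Proposition~\ref{prop:L3isom}, with the componentwise order of $L_3$ transported to CII granules, AINs and the remaining structures, just as the paper does when it says a complete lattice structure isomorphic to $L_3$ can be induced on CII granules. The only substantive difference is that you give an explicit bijection $(x_1,x_2-x_1,1-x_3)$ for picture fuzzy sets, whereas the paper leaves this case implicit (its only explicit map is the later partial-sum bijection $\psi:\mathbb{A}_n\to L_n$). Both are affine bijections of simplices, but yours has the advantage that composing with $s_0$ recovers the map $(x_1,x_2)\mapsto(x_1,1-x_2)$ of the first proposition, with neutral degree zero.
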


\subsection{4, 5, and 6-dimensional simplices: $L_4, L_5,$ and $L_6$.}
To illustrate the use of $L_4$ in the literature, let us start by considering interval-valued intuitionistic fuzzy sets \cite{ATANASSOV_interval}, which are based on the set 
\begin{equation*}
    \mathbb{A}_4=\{([a_1,a_2],[b_1,b_2])\in \mathbb{I}^2: a_2+b_2\le 1\}
\end{equation*}

\begin{prop}There is a bijection between the sets  $L_4$ and $\mathbb{A}_4$.
\end{prop}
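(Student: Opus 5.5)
We will write down an explicit map in the same spirit as the proof of the first proposition of Section \ref{sec:revision}, where an Atanassov pair $(a,b)$ was sent to the interval $(a,1-b)$. The interval-valued membership $[a_1,a_2]$ and the interval-valued non-membership $[b_1,b_2]$ are each ordered, and the constraint $a_2+b_2\le 1$ says $a_2\le 1-b_2$. Complementing the non-membership interval gives $1-b_2\le 1-b_1$. Chaining these inequalities suggests
\begin{equation*}
\Phi([a_1,a_2],[b_1,b_2])=(a_1,\,a_2,\,1-b_2,\,1-b_1).
\end{equation*}

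\textbf{Step 1 (well-definedness).} We check that $\Phi$ lands in $L_4$. We have $a_1\le a_2$ because $[a_1,a_2]\in\mathbb{I}$. We have $a_2\le 1-b_2$ by the defining constraint of $\mathbb{A}_4$. We have $1-b_2\le 1-b_1$ because $b_1\le b_2$. All four coordinates lie in $[0,1]$ since every $a_i,b_i\in[0,1]$.

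\textbf{Step 2 (inverse).} We define $\Psi:L_4\to\mathbb{A}_4$ by
\begin{equation*}
\Psi(x_1,x_2,x_3,x_4)=([x_1,x_2],[1-x_4,1-x_3]).
\end{equation*}
Here $[x_1,x_2]\in\mathbb{I}$ since $x_1\le x_2$, and $[1-x_4,1-x_3]\in\mathbb{I}$ since $x_3\le x_4$. The constraint holds because $x_2+(1-x_3)\le 1$ is equivalent to $x_2\le x_3$. A direct substitution then shows $\Psi\circ\Phi=\mathrm{id}$ and $\Phi\circ\Psi=\mathrm{id}$, so $\Phi$ is a bijection.

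\textbf{Main point of care.} There is no real obstacle; the only thing to get right is the ordering of the coordinates. Complementation reverses order, so the non-membership interval must enter as $(1-b_2,1-b_1)$ and not as $(1-b_1,1-b_2)$. Optionally, one could add that $\Phi$ becomes a lattice isomorphism if $\mathbb{A}_4$ carries its usual order: increasing in the membership interval and decreasing in the non-membership interval. That order corresponds exactly to the componentwise order $\le^4$.
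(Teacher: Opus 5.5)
Your proposal is correct and follows the paper's proof: you use the same map $([a_1,a_2],[b_1,b_2])\mapsto(a_1,a_2,1-b_2,1-b_1)$ and the same inverse $(x_1,x_2,x_3,x_4)\mapsto([x_1,x_2],[1-x_4,1-x_3])$. You go further than the paper by checking well-definedness and both compositions explicitly, and your remark on the lattice order is also correct.
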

\begin{proof}
From a pair of intervals $([a_1,a_2],[b_1,b_2])\in \mathbb{A}_4$, $a_1\le a_2\le 1-b_2\le 1-b_1$, i.e.,
$(a_1, a_2, 1-b_2, 1-b_1)\in L_4$. Conversely, if $(x_1,x_2,x_3,x_4)\in L_4$, then $([x_1,x_2],[1-x_4,1-x_3])\in \mathbb{A}_4$, since $x_2-x_3\le 0$.
\end{proof}

On the other hand, shadowed sets \cite{Shadowed} offer a mechanism to simplify fuzzy information by preserving only the essential structures: the core (certainty), the excluded area (impossibility), and the shadow (uncertainty). We can demonstrate that the membership structure of a shadowed set is canonically isomorphic to the order polytope $L_4$.

\begin{defi}[Shadowed Set \cite{Shadowed}]
Let $X$ be a universe of discourse. A Shadowed Set $\mathcal{S}$ on $X$ is defined as a mapping $S: X \to \{0, 1, [0,1]\}$.
\end{defi}
However, in its continuous extensions (often used in optimization and granular computing), a shadowed set is characterized by two nested intervals that delineate the `shadow region'. These intervals are indeed the core and the support of a certain corresponding fuzzy set. 
\begin{theo}[Shadowed Set Isomorphism]
The space of shadowed sets defined on $[0,1]$ is isomorphic to the simplicial lattice $L_4$.
\end{theo}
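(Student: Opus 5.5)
We read the statement, as the paragraph preceding it suggests, in terms of the continuous (trapezoidal) model: a shadowed set on $[0,1]$ is determined by two nested intervals, the support $[s_1,s_4]$ and the core $[s_2,s_3]$, with $[s_2,s_3]\subseteq[s_1,s_4]\subseteq[0,1]$. On $[s_2,s_3]$ the membership is $1$ (certainty), outside $[s_1,s_4]$ it is $0$ (exclusion), and on $[s_1,s_2)\cup(s_3,s_4]$ it is the whole interval $[0,1]$ (the shadow). The proof constructs an explicit map $\Psi$ from such shadowed sets to $L_4$, in the same style as the proof of the Rough Set Representation theorem and of the bijection $L_4\cong\mathbb{A}_4$.

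\textbf{Definition of $\Psi$.} Set $\Psi(\mathcal{S})=(s_1,s_2,s_3,s_4)$. The nesting $[s_2,s_3]\subseteq[s_1,s_4]\subseteq[0,1]$, together with the fact that each is an interval, says exactly that $0\le s_1\le s_2\le s_3\le s_4\le 1$. This is precisely the defining condition of $L_4$, so $\Psi$ is well defined.

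\textbf{Bijectivity.} Injectivity holds because the mapping $S$ is recovered from the four endpoints via the three-region description above. For surjectivity, any $(x_1,x_2,x_3,x_4)\in L_4$ gives the nested pair $[x_2,x_3]\subseteq[x_1,x_4]$, and hence a shadowed set. The degenerate cases (empty shadow when $x_1=x_2$ and $x_3=x_4$, a point core when $x_2=x_3$) are admissible, since $L_4$ is closed and includes its faces.

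\textbf{Order.} To obtain an isomorphism and not merely a bijection, we transport or identify the order. The natural order on nested-interval pairs is the componentwise order on the four endpoints, i.e. sliding both core and support to the right. Under $\Psi$ this coincides with $\le^4$, as in the fuzzy rough case. Meet and join are then componentwise min and max, so $\Psi$ is a lattice isomorphism. As an alternative, one can compose with the bijection $L_4\cong\mathbb{A}_4$ already proved, reading the core and the complement of the support as a membership and non-membership interval.

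\textbf{Main obstacle.} The only real difficulty is definitional. The discrete definition $S:X\to\{0,1,[0,1]\}$ does not literally carry four real parameters, so the argument must commit to the continuous model on the universe $[0,1]$, in which a shadowed set is identified with its core/support pair. It must also state which order on shadowed sets is intended. Once these two choices are fixed, every remaining step is a routine check.
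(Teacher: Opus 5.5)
Your proof is correct and is essentially the paper's: both send the core/support pair to $(S^-,C^-,C^+,S^+)\in L_4$ and observe that the nesting of the core inside the support inside $[0,1]$ is exactly the chain condition. The paper does this pointwise in $x\in X$ and stops at a bijection, so your explicit choice of model and of the transported componentwise order only strengthens it. The one inaccuracy is in your aside on $\mathbb{A}_4$, though nothing in your main argument depends on it: composing with the paper's bijection gives $([s_1,s_2],[1-s_4,1-s_3])$, i.e.\ the left shadow and the reflected right shadow. It does not give the core and the complement of the support, and that complement is not even an interval in general.
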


\begin{proof}
The mapping is constructed naturally. For any element $x \in X$, the state of the shadowed set is fully described by the pair of intervals $(C(x),S(x))$ corresponding to the core $C(x)=[C^-(x),C^+(x)]$ and the support $S(x)=[S^-(x),S^+(x)]$. Since $C(x)\subseteq S(x)\subseteq [0,1]$, the mapping valuated on $L_4$
\begin{equation*}
    (C(x),S(x))\to (S^-(x),C^-(x),C^+(x),S^+(x)])
\end{equation*}
is a bijection.
\end{proof}

Similar to CII,  in \cite{HCYL2}, the Interval type Cognitive Interval information granule (ICII) is presented as a pair of nested intervals: the evaluation and the acceptance intervals. This uncertainty allows other  values  to
be accepted or tolerated by the decision-maker, providing a convenient
and effective intersubjective communication and negotiation
between her/him and other experts.

\begin{defi} \cite{HCYL2}  An interval (type) cognitive interval information (ICII) granule is a pair of the form $([x^-,x^+],[a^-,a^+])\in \mathbb{I}\times \mathbb{I}$  such that 
$[x^-,x^+]\subset [a^-,a^+]$ is the interval evaluation value and $[a^-,a^+]$ is called the acceptance interval.
\end{defi}
Of course, every CII granule $(x,[a^-,a^+])$ is an ICII granule, taking as interval $[x^-,x^+]=[x,x]$. The following result is also clear. 

\begin{prop}
There is a bijection between the set of ICII granules and the set $L_4$. 
\end{prop}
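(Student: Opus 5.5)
The plan is to write down an explicit map and its inverse, following the proofs given earlier for $\mathbb{A}_4$ and for shadowed sets. An ICII granule $([x^-,x^+],[a^-,a^+])$ is sent to
\begin{equation*}
\Psi([x^-,x^+],[a^-,a^+]) = (a^-, x^-, x^+, a^+).
\end{equation*}
The first step is to check that this map is well defined, i.e. that the image lies in $L_4$. Both intervals are closed subintervals of $[0,1]$, so $x^-\le x^+$ and $a^-\le a^+$, with all four values in $[0,1]$. The nesting condition $[x^-,x^+]\subset [a^-,a^+]$ (read as inclusion, not necessarily strict) is equivalent to $a^-\le x^-$ and $x^+\le a^+$. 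Together these give the chain $a^-\le x^-\le x^+\le a^+$, so $\Psi$ takes values in $L_4$.

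The second step is to build the inverse. Given $(y_1,y_2,y_3,y_4)\in L_4$, we set
\begin{equation*}
\Psi^{-1}(y_1,y_2,y_3,y_4)=([y_2,y_3],[y_1,y_4]).
\end{equation*}
Since $y_1\le y_2\le y_3\le y_4$ in $[0,1]$, both pairs are genuine intervals in $\mathbb{I}$, and $[y_2,y_3]\subseteq[y_1,y_4]$. Hence the result is an ICII granule. The two compositions are the identity by inspection, because each map only permutes the coordinates; this gives the bijection.

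The only real obstacle is interpretive rather than computational: the definition writes $\subset$. If that symbol meant strict inclusion, the target would be $L_4$ with the faces $y_1=y_2=y_3=y_4$-type degeneracies removed (specifically $y_1=y_2$ and $y_3=y_4$ holding simultaneously), and the statement would fail as written. I would state explicitly that $\subset$ is read as $\subseteq$. This reading is consistent with the remark just before the proposition that every CII granule is an ICII granule with $[x,x]$, which requires allowing degenerate inner intervals. I would also note that the map is identical in form to the shadowed-set isomorphism, with the evaluation interval playing the role of the core and the acceptance interval the role of the support. If desired, it is even an order isomorphism when ICII granules are given the componentwise order on $(a^-,x^-,x^+,a^+)$.
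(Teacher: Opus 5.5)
Your argument is correct and is exactly what the paper intends. The paper states this proposition as clear without giving a proof. Its analogous proofs (shadowed sets, $n$-ICUI granules) use the same coordinate map, which here sends nested intervals to $(a^-,x^-,x^+,a^+)$.

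One small quibble with your aside: even under a strict-inclusion reading, the statement would not literally fail, since both sets have the cardinality of the continuum. Only your explicit coordinate map would stop being surjective, missing exactly the points with $y_1=y_2$ and $y_3=y_4$.
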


\begin{defi}[\cite{RBUI}] A relative basic
uncertain information (RBUI) granule is expressed by a triple $(x,[a^-,a^+],c)\in [0,1]\times \mathbb{I}\times [0,1]$, where $x\in [a^-,a^+]$ is the plausible value,  $[a^-,a^+]$ is a refined interval in which the true value is known to be
included, $c$ is the relative certainty degree of $x$.
\end{defi}

\begin{prop}
    The set of RBUI granules can be embedded into $L_4$.
\end{prop}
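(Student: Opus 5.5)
We will construct an explicit injective map $\Psi$ from the set of RBUI granules into $L_4$. The plan combines two earlier constructions: the certainty/uncertainty dilatation from the proof of Proposition~\ref{prop:BUI_Atan}, and the identification of a CII granule $(x,[a^-,a^+])$ with the triple $(a^-,x,a^+)\in L_3$ from Proposition~\ref{prop:L3isom}.

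The idea is to treat $c$ as the certainty of $x$ \emph{relative to} the refined interval $[a^-,a^+]$, rather than to the whole of $[0,1]$. We therefore dilate $x$ inside $[a^-,a^+]$, in exactly the way a BUI granule is dilated inside $[0,1]$:
\[
x^-=x-(1-c)(x-a^-),\qquad x^+=x+(1-c)(a^+-x).
\]
We then set
\[
\Psi(x,[a^-,a^+],c)=(a^-,x^-,x^+,a^+).
\]
Since $c\in[0,1]$ and $a^-\le x\le a^+$, we get $a^-\le x^-\le x\le x^+\le a^+$. Hence $\Psi$ is well defined into $L_4$. Moreover, $([x^-,x^+],[a^-,a^+])$ is an ICII granule, so $\Psi$ factors through the bijection between ICII granules and $L_4$.

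The main obstacle is injectivity, which fails in degenerate configurations.
\begin{itemize}
\item If $c=0$, the inner interval fills the acceptance interval and $x$ is lost.
\item If $a^-=a^+$, the value $c$ is lost.
\end{itemize}
Away from these cases, $x$ and $c$ can be recovered from the four coordinates. Write $\ell=a^+-a^-$. The length of the inner interval is $x^+-x^-=(1-c)\ell$, so $c=1-(x^+-x^-)/\ell$. Once $c>0$ is known, $x^- - a^- = c(x-a^-)$ gives $x=a^-+(x^- - a^-)/c$. This mirrors the inversion in Proposition~\ref{prop:BUI_Atan}.

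So the plan is to state the embedding on the non-degenerate RBUI granules, those with $c>0$ and $a^-<a^+$, in analogy with $BUI^*$. We then handle the remaining granules in one of two ways:
\begin{itemize}
\item identify them under the natural equivalence (when $a^-=a^+$ the granule is crisp and $c$ is irrelevant), or
\item use an alternative injective encoding.
\end{itemize}
If a genuinely injective map on all of $[0,1]\times\mathbb{I}\times[0,1]$ is required, the fallback is a cardinality argument. Both the RBUI set and $L_4$ have the cardinality of the continuum, so an injection exists abstractly. However, we would prefer the geometric map $\Psi$ above, together with an explicit statement of the degenerate cases it collapses, since that matches the spirit of Proposition~\ref{prop:BUI_Atan}.
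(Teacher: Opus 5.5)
Your map is exactly the paper's. Expanding, $x-(1-c)(x-a^-)=cx+(1-c)a^-$ and $x+(1-c)(a^+-x)=cx+(1-c)a^+$, so $\Psi(x,[a^-,a^+],c)=(a^-,\,xc+(1-c)a^-,\,xc+(1-c)a^+,\,a^+)$, and writing this map down is the whole of the paper's proof.

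The paper never checks injectivity, and your objection is correct. Every $(x,[a^-,a^+],0)$ goes to $(a^-,a^-,a^+,a^+)$, and every $(a,[a,a],c)$ goes to $(a,a,a,a)$. So the paper's argument only embeds the granules with $c>0$ and $a^-<a^+$, and your inversion formulas settle injectivity there.

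You can make the analogy with Proposition~\ref{prop:BUI_Atan} exact. $\Psi$ is onto $L_4$: for $y\in L_4$ with $(y_2,y_3)\neq(y_1,y_4)$, your formulas return a non-degenerate preimage, using $y_3\le y_4$ to get $x\le a^+$. The degenerate fibres lie precisely over the triangular face $\{y\in L_4: y_1=y_2,\ y_3=y_4\}=\mathrm{conv}\{v_0,v_2,v_4\}$, just as all $(x,0)$ collapse onto the vertex $v_1=(0,1)$ of $L_2$. Hence $\Psi$ identifies $L_4$ with the RBUI set after collapsing those fibres, which also shows that calling RBUI granules ``isomorphic'' to $L_4$ in the later theorem needs this identification.

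If you want an injection of the whole set, you need not fall back on cardinality. The map $(x,[a^-,a^+],c)\mapsto\bigl(\tfrac{a^-}{4},\tfrac{1+x}{4},\tfrac{2+a^+}{4},\tfrac{3+c}{4}\bigr)$ is an explicit injective, coordinatewise monotone map into $L_4$, though it discards the dilatation semantics.
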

\begin{proof}
We can  transform  a RBUI granule $(x,[a^-,a^+],c)$ into 
\begin{equation*}
    (a^-,xc+(1-c)a^-,xc+(1-c)a^+,a^+)\in L_4.
\end{equation*}
\end{proof}

It is easy to see that the set of CII can be isomorphic to the space of all triangular fuzzy numbers, but with very different meanings.

\begin{defi} \cite{Jin2023} A granule of interval type
Basic uncertain information (ItBUI) is a pair with the form $(x, [c^-, c^+])\in [0,1]\times \mathbb{I}$, in which $x\in [0,1]$ is the evaluation value and $[c^-, c^+]$ is the interval certainty degree of a, measuring the plausibility of being trusted, convincing or believable of input value $x$.
\end{defi}
\begin{prop}
  The set of ItBUI can be embedded into $L_4$.
\end{prop}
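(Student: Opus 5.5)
We will proceed exactly as for the RBUI granules and for Proposition \ref{prop:BUI_Atan}, using the certainty/uncertainty dilatation of BUI granules. The idea is that an ItBUI granule $(x,[c^-,c^+])$ is a family of BUI granules $(x,c)$ with $c\in[c^-,c^+]$. Each such BUI granule is dilated into the interval $I_{(x,c)}=[cx,\,cx+1-c]$. A higher certainty degree gives a narrower interval nested inside the one obtained with lower certainty. So the two extreme certainty degrees should produce two nested intervals, which is a point of $L_4$.

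Concretely, we will define
\begin{equation*}
\Psi(x,[c^-,c^+])=\bigl(c^-x,\; c^+x,\; c^+x+1-c^+,\; c^-x+1-c^-\bigr).
\end{equation*}
The first step is to check that $\Psi$ takes values in $L_4$. All coordinates lie in $[0,1]$ because they are convex combinations of $x\in[0,1]$ with $0$ or with $1$. The inequalities are checked in order:
\begin{itemize}
\item $c^-x\le c^+x$ follows from $c^-\le c^+$ and $x\ge 0$.
\item $c^+x\le c^+x+1-c^+$ follows from $c^+\le 1$.
\item $c^+x+1-c^+\le c^-x+1-c^-$ is equivalent to $(c^+-c^-)(x-1)\le 0$, which holds since $x\le 1$.
\end{itemize}
Each step is a one-line verification.

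The second step, and the main obstacle, is injectivity, since an embedding needs a one-to-one map. Write $\Psi=(y_1,y_2,y_3,y_4)$. From the image we recover
\begin{equation*}
c^+=1-(y_3-y_2),\qquad c^-=1-(y_4-y_1),
\end{equation*}
as the complements of the lengths of the inner and outer intervals. Then $x=y_2/c^+$ whenever $c^+>0$. The degenerate case $c^+=0$ forces $c^-=0$, and then $\Psi(x,[0,0])=(0,0,1,1)$ for every $x$, so injectivity fails there.

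The same issue appears in Proposition \ref{prop:BUI_Atan}, where the granules with $c=0$ were removed. We would therefore state the embedding for granules with $c^+>0$ (the analogue of $BUI^*$). Alternatively, we would identify all granules of null certainty, which carry no information about $x$. On that domain $\Psi$ is injective by the explicit inverse formulas above.

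Optionally, we would remark that the image is not all of $L_4$. The image is characterized by the compatibility condition that both intervals come from the same $x$: namely $y_2/(1-y_3+y_2)=y_1/(1-y_4+y_1)$ whenever the denominators are nonzero. This is why the map is only an embedding rather than a bijection, mirroring the remark after Theorem \ref{L2isom} that BUI granules embed into $L_2$ but are not bijective with it.
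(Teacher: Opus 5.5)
Your map $\Psi$ is exactly the paper's map. The paper's proof consists only of the two dilations $I_{(x,c^-)}$ and $I_{(x,c^+)}$, the nesting $I_{(x,c^+)}\subseteq I_{(x,c^-)}$ (justified by $c^-\le c^+$), and the conclusion that the resulting quadruple lies in $L_4$. Your three inequality checks are the explicit form of that nesting.

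Where you go beyond the paper is injectivity, which the paper never addresses. Your computation is correct. For $c^+>0$ the formulas $c^+=1-(y_3-y_2)$, $c^-=1-(y_4-y_1)$, $x=y_2/c^+$ invert $\Psi$. Every granule $(x,[0,0])$, however, is sent to $(0,0,1,1)$, so the paper's argument does not actually produce an injective map on the whole set of ItBUI granules. Your restriction to $c^+>0$, or the quotient identifying null-certainty granules, is a genuine repair of that defect.

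Note, though, that this proves a weaker statement than the proposition as written. The proposition itself remains true if you give up the dilation semantics. For instance, $(x,[c^-,c^+])\mapsto \bigl(c^-/2,\,c^+/2,\,(1+x)/2,\,1\bigr)$ lands in $L_4$, since $c^-/2\le c^+/2\le 1/2\le (1+x)/2\le 1$. It is injective on all ItBUI granules, and is even an order embedding of $[0,1]\times L_2$ with the product order. So either state your result explicitly as an embedding of the granules with $c^+>0$ (the analogue of $BUI^*$), or use a map of this kind if you want the claim verbatim.
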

\begin{proof}
Let us transform $(x, [c^-, c^+])$ into intervals $I_{(x,c)}\in \mathbb{I}$ by the certainty/uncertainty dilation:
$$I_{(x,c^-)}=[x-(1-c^-)x,x+(1-c^-)(1-x)]=[c^-x,c^-x+1-c^-].$$
$$I_{(x,c^+)}=[x-(1-c^+)x,x+(1-c^+)(1-x)]=[c^+x,c^+x+1-c^+].$$
Since $c^-\le c^+$,  $I_{(x,c^+)}\subset I_{(x,c^-)}$ and $(c^-x, c^+x, c^+x+1-c^+,c^-x+1-c^-)\in L_4$.
\end{proof}

\begin{defi} \cite{Jin2023}  A granule of BUI type Basic Uncertain information (BtBUI) is a pair of the form $(y,(x,c))$, in which $y \in [0, 1]$ is the
evaluation value and $(x,c)$ is the BUI certainty degree measuring the quantity of being trusted, convincing, or believable, of input value $y$.
\end{defi}

\begin{prop}
    The set of BtBUIs can be embedded into $L_4$.
\end{prop}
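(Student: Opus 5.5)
The plan is to imitate the proof given for ItBUI granules: apply the certainty/uncertainty dilation twice in nested fashion, so that each BtBUI granule produces two nested intervals, and then read off the four endpoints as an element of $L_4$.

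\textbf{Step 1 (inner interval).} The inner BUI $(x,c)$ is the certainty degree attached to $y$, and $x,c\in[0,1]$. By the construction in the proof of Proposition~\ref{prop:BUI_Atan}, it determines the interval
\[
I_{(x,c)}=[cx,\,cx+1-c]\in L_2 .
\]
Its endpoints $c^-:=cx$ and $c^+:=cx+1-c$ satisfy $0\le c^-\le c^+\le 1$. So $(y,[c^-,c^+])$ is an ItBUI granule.

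\textbf{Step 2 (reduce to ItBUI).} Apply the dilation from the ItBUI proposition to $y$ with the two certainty levels $c^-\le c^+$. This gives the nested intervals $I_{(y,c^+)}\subseteq I_{(y,c^-)}$, and hence the point
\[
\Psi(y,(x,c))=\bigl(c^-y,\; c^+y,\; c^+y+1-c^+,\; c^-y+1-c^-\bigr).
\]
The monotonicity $c^-y\le c^+y\le c^+y+1-c^+\le c^-y+1-c^-$ holds because $y\ge 0$, $c^+\le 1$ and $1-y\ge 0$. This shows $\Psi(y,(x,c))\in L_4$, exactly as in the ItBUI case.

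\textbf{Step 3 (injectivity).} This is the main obstacle. The map $(x,c)\mapsto I_{(x,c)}$ is injective only for $c>0$, because all BUIs with $c=0$ collapse to $[0,1]$. Likewise, $y$ can be recovered from the dilated intervals only when the certainty is positive. So I would restrict to the analogue of $BUI^*$ and show that $\Psi$ is invertible on its image.
\begin{itemize}
\item From the coordinates $(z_1,z_2,z_3,z_4)$ of the image, compute $z_1+1-z_4=c^-$ and $z_2+1-z_3=c^+$.
\item Recover $y=z_2/c^+$, which requires $c^+>0$; since $c^+=cx+1-c$, this holds automatically unless $c=1$ and $x=0$.
\item Recover $(x,c)$ from $[c^-,c^+]$ by the inverse formula in Proposition~\ref{prop:BUI_Atan}: $c=c^-+1-c^+$ and $x=c^-/c$, valid for $c>0$.
\end{itemize}
The embedding is therefore injective on the subset where $c>0$ and $c^+>0$. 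If the statement is meant literally for all BtBUIs, I would instead state an embedding of the quotient that identifies degenerate granules, mirroring the remark made after Theorem~\ref{L2isom} for BUI granules.
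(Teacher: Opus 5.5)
Your Steps 1--2 are exactly the paper's proof. The paper dilates the BUI certainty $(x,c)$ into $[c^-,c^+]=[cx,cx+1-c]$, regards $(y,[c^-,c^+])$ as an ItBUI, and applies the ItBUI map, which gives your $\Psi$.

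Your Step 3 is correct and goes beyond the paper, which never checks injectivity. $\Psi$ sends every granule with $c=0$ to $(0,y,y,1)$ regardless of $x$, and every granule with $(x,c)=(0,1)$ to $(0,0,1,1)$ regardless of $y$. So the paper's map is an embedding only on the subset you identify. If you want the statement literally for all BtBUIs, you need not pass to a quotient: an injection such as $(y,x,c)\mapsto\bigl(y/4,(1+x)/4,(2+c)/4,1\bigr)$ trivially exists, though it carries none of the dilation semantics.
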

\begin{proof}
    By utilizing the certainty/uncertainty dilation, we map the BUI certainty $(x,c)$ degree into $[c^-,c^+]$. Consequently, the pair $(y,[c^-,c^+])$ is an ItBUI and can be remapped into $L_4$.    
\end{proof}

Let us summarize in a single result this discussion.

\begin{theo}[Structural Equivalence of $L_4$-Fuzzy Sets]
The space of 4-dimensional fuzzy sets $\mathcal{F}_{L_4}(X)$ is isomorphic to the following uncertainty modeling frameworks: (i) Interval-valued Intuitionistic Fuzzy Sets \cite{ATANASSOV_interval}, (ii) Shadowed Sets \cite{Shadowed}, (iii) ICII granules on a set $X$  \cite{HCYL2}, (iv) RBUI granules on a set $X$ \cite{RBUI}, (v) ItBUI granules on a set $X$ \cite{Jin2023}, and Trapezoidal Fuzzy Sets on $X$ \cite{METRIC}.
\end{theo}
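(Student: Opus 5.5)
The plan is to reduce the statement about fuzzy sets on $X$ to a statement about the value lattices. If $\varphi:M\to L_4$ is a bijection from a set $M$ of membership values onto $L_4$, then postcomposition $A\mapsto \varphi\circ A$ is a bijection from the maps $X\to M$ onto $\mathcal{F}_{L_4}(X)$. The order on $M$ will be defined, or checked, to be the pullback of the componentwise order $\le^4$, and the order on fuzzy sets is pointwise in both spaces. Hence $\varphi\circ(\cdot)$ is an order isomorphism, and therefore a lattice isomorphism, since $L_4$ is a complete lattice and meets and joins are computed pointwise. So for each item (i)--(vi) it suffices to exhibit a bijection between its membership-value set and $L_4$, and to say which order is meant on that set.

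For items (i)--(iii) and the trapezoidal case I will invoke the maps already constructed. For (i) I use the bijection $L_4\leftrightarrow\mathbb{A}_4$, $([a_1,a_2],[b_1,b_2])\mapsto(a_1,a_2,1-b_2,1-b_1)$. For (ii), shadowed sets, I use the Shadowed Set Isomorphism theorem, $(C,S)\mapsto(S^-,C^-,C^+,S^+)$. For (iii), ICII granules, I use $([x^-,x^+],[a^-,a^+])\mapsto(a^-,x^-,x^+,a^+)$, whose image is all of $L_4$ because nestedness of the intervals is exactly the chain condition. For trapezoidal fuzzy sets I use $(a,b,c,d)\mapsto(a,b,c,d)$, with the four ordered breakpoints of the trapezoid. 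In each case I check three things: the inverse map is well defined, the two composites are identities, and the natural order (interval inclusion, or the componentwise order on endpoints, with the order reversed on nonmembership coordinates in $\mathbb{A}_4$) coincides with $\le^4$ under the map.

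The main obstacle is items (iv) and (v), RBUI and ItBUI granules. Earlier in the paper these were only shown to embed into $L_4$, and a dilation-type map cannot be a bijection onto $L_4$. An ItBUI granule carries three real parameters, and the dilation map $(x,[c^-,c^+])\mapsto(c^-x,c^+x,c^+x+1-c^+,c^-x+1-c^-)$ is not onto. For example, its image satisfies $\pi_1\pi_4$-type constraints tying the outer and inner intervals to a common centre $x$, and it collapses all $x$ when $c^+=0$. To obtain the full isomorphism the statement asserts, I will not try to repair the dilation. Instead I will use transport of structure, exactly as the paper does after Proposition \ref{prop:BUI_Atan}, where an order on BUI granules is induced from $L_2$.

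Concretely, the value sets $[0,1]\times\mathbb{I}\times[0,1]$ (RBUI, with $x\in[a^-,a^+]$) and $[0,1]\times\mathbb{I}$ (ItBUI) both have the cardinality of the continuum, as does $L_4$. So there are bijections $\psi$ onto $L_4$. A Schröder--Bernstein argument gives such a bijection explicitly: combine the injection into $L_4$ from the earlier embedding propositions with an injection $L_4\hookrightarrow[0,1]$, for instance by digit interleaving. Each granule set is then endowed with the induced lattice order, $g\le g'$ if and only if $\psi(g)\le^4\psi(g')$, which makes $\psi$ a lattice isomorphism by construction. The first paragraph then lifts this to maps on $X$.

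I will remark that, on the image of the original embedding, the order could instead be taken to be the one induced by the dilation map, and that this order agrees with the natural interval/certainty semantics there. The equivalence asserted in the theorem is therefore set- and order-theoretic, not topological. This caveat is the step I expect a reader to scrutinize, so I will state explicitly which order is meant on these two granule sets.
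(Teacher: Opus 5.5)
For (i)--(iii) your argument is the paper's. The theorem is only a summary of the preceding propositions: the bijection $L_4\leftrightarrow\mathbb{A}_4$, the shadowed-set map $(C,S)\mapsto(S^-,C^-,C^+,S^+)$, and the ICII map. Your postcomposition lemma is the step the paper leaves implicit when passing to $\mathcal{F}_{L_4}(X)$. You are also right that for (iv) and (v) the paper offers only the two embedding propositions, so its own argument does not give the isomorphism claimed there.

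The gap is in your repair. As written, the Schr\"oder--Bernstein step uses ``the injection into $L_4$ from the earlier embedding propositions'', but neither map is injective. You note yourself that the ItBUI dilation collapses every $x$ when $c^+=0$. The RBUI map likewise sends every $(x,[a^-,a^+],0)$ to $(a^-,a^-,a^+,a^+)$ and every $(a,[a,a],c)$ to $(a,a,a,a)$. You could replace this with a bare cardinality count, but then the argument proves nothing about $L_4$. The same transport of structure makes RBUI and ItBUI ``isomorphic'' to $L_1$, to $L_2$, or to any set of the cardinality of the continuum. The transported order also has no relation to evaluation values, intervals or certainty degrees. This trivializes the statement rather than establishing the structural equivalence it asserts.

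You have also misdiagnosed (iv): the RBUI map $(x,[a^-,a^+],c)\mapsto(a^-,xc+(1-c)a^-,xc+(1-c)a^+,a^+)$ is in fact onto $L_4$. Take $(y_1,y_2,y_3,y_4)\in L_4$ not of the form $(a,a,b,b)$, and set $a^-=y_1$, $a^+=y_4$, $c=1-\frac{y_3-y_2}{y_4-y_1}\in(0,1]$ and $x=y_1+\frac{y_2-y_1}{c}$. Then $x\ge y_1$ is immediate, and $x\le y_4$ follows from $y_3\le y_4$. The points $(a,a,b,b)$ are hit with $c=0$. So the map restricts to a genuine bijection between the nondegenerate granules ($c>0$, $a^-<a^+$) and $L_4\setminus\{(a,a,b,b)\}$. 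This exactly parallels the paper's $BUI^*\leftrightarrow L_2\setminus\{(0,1)\}$, and it is the meaningful form of (iv).

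For (v) your parameter count is the right observation. The ItBUI values form the compact $3$-dimensional set $[0,1]\times\mathbb{I}$, so no continuous bijection onto the $4$-dimensional $L_4$ exists. The dilation map becomes injective only after discarding the granules with $c^+=0$. The correct conclusion is that (v) holds only as an embedding, i.e.\ the statement overclaims there. An arbitrary cardinality bijection hides this rather than proving the item.
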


There are also examples in the literature of preference structures that can be seen as 5-dimensional simplices.

\begin{defi} \cite{HCYL} 
A cognitive uncertain information (CUI) granule is a triad with the form  $(x,[a_1,a_2],[u_1,u_2])\in [0,1]\times \mathbb{I}\times \mathbb{I}$  such that  $x\in [a_1,a_2]$ and $[a_1,a_2]\subseteq [u_1,u_2]$. $x$ is the evaluation value,  $[a_1,a_2]$  is called the acceptance interval and  $[0,1]\setminus [u_1,u_2]$ is called the unaccepted area.
\end{defi}

\begin{prop}There is a bijection between the set of CUIs and $L_5$. 
\end{prop}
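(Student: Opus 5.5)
The plan is to write down the obvious map explicitly and check that it is a bijection. This follows the same pattern as Proposition \ref{prop:L3isom} and the $L_4$ results for shadowed sets and ICII granules. Given a CUI granule $(x,[a_1,a_2],[u_1,u_2])$, I will set
\begin{equation*}
\Psi(x,[a_1,a_2],[u_1,u_2])=(u_1,a_1,x,a_2,u_2).
\end{equation*}
The first step is to show that $\Psi$ lands in $L_5$. Since $[u_1,u_2]\subseteq[0,1]$, we have $0\le u_1$ and $u_2\le 1$. The nesting $[a_1,a_2]\subseteq[u_1,u_2]$ gives $u_1\le a_1$ and $a_2\le u_2$. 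The condition $x\in[a_1,a_2]$ gives $a_1\le x\le a_2$. Chaining these yields $0\le u_1\le a_1\le x\le a_2\le u_2\le 1$, so the image is a non-decreasing $5$-tuple in $[0,1]$.

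The second step is to write down the inverse. For $(y_1,\dots,y_5)\in L_5$, define
\begin{equation*}
\Psi^{-1}(y_1,\dots,y_5)=(y_3,[y_2,y_4],[y_1,y_5]).
\end{equation*}
Monotonicity gives $y_2\le y_4$ and $y_1\le y_5$, so both brackets are elements of $\mathbb{I}$. It also gives $y_1\le y_2$ and $y_4\le y_5$, which is exactly the nesting $[y_2,y_4]\subseteq[y_1,y_5]$. Finally, $y_2\le y_3\le y_4$ says that $x=y_3$ lies in the acceptance interval. Hence the image is a valid CUI granule.

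The third step is to check that both composites are the identity. This is immediate because each map is only a rearrangement of the same five coordinates.

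The one point needing care is the degenerate cases, for instance $a_1=a_2$, or $u_1=a_1$, or $x$ coinciding with an endpoint. The definition of a CUI uses non-strict inclusions and closed intervals, so these cases are allowed on the CUI side; likewise $L_5$ allows equalities. So no boundary points need to be excluded, unlike in Proposition \ref{prop:BUI_Atan}. Once this is noted, the bijection is a bookkeeping check rather than a genuine obstacle. If one also wants order compatibility, one can transport the componentwise order $\le^5$ along $\Psi$, exactly as was done for CII granules, to get a lattice structure on the set of CUIs.
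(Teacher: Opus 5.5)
Your proposal is correct and takes the same approach as the paper, whose proof is just the map $(x,[a_1,a_2],[u_1,u_2])\mapsto(u_1,a_1,x,a_2,u_2)$ together with the chain $u_1\le a_1\le x\le a_2\le u_2$. You also write out the inverse $(y_1,\dots,y_5)\mapsto(y_3,[y_2,y_4],[y_1,y_5])$ and check the degenerate cases, which the paper leaves implicit.
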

\begin{proof}
Since $x\in [a_1,a_2]$ and $[a_1,a_2]\subseteq [u_1,u_2]$,  then $u_1\le a_1\le x\le a_2\le u_2$ and $(u_1,a_1,x,a_2,u_2)\in L_5$. 
\end{proof}

We conclude the section by showing one preference structure that is bijective with $L_6$.

\begin{defi} \cite{HCYL2} 
An interval (type) cognitive uncertain information (ICUI) granule is a triad with the form  $([x_1,x_2],[a_1,a_2],[u_1,u_2])\in \mathbb{I}\times \mathbb{I}\times \mathbb{I}$  such that  $ [x_1,x_2]\subset [a_1,a_2]\subset [u_1,u_2]$. $[x_1,x_2]$ is the evaluation interval value,  $[a_1,a_2]$  is called the acceptance interval and  $[0,1]\setminus [u_1,u_2]$ is called the unaccepted area.
\end{defi}

\begin{prop}
There is a bijection between the set of ICUI granules and $L_6$. 
\end{prop}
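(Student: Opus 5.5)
The plan is to follow the same pattern used for CUI granules and $L_5$, and for ICII granules and $L_4$. We build an explicit map from ICUI granules to $L_6$ by reading off the endpoints of the three nested intervals from the outside in. Then we check that this map is well defined and that it has a two-sided inverse.

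\textbf{Forward map.} Given an ICUI granule $([x_1,x_2],[a_1,a_2],[u_1,u_2])$, define
\begin{equation*}
\Psi([x_1,x_2],[a_1,a_2],[u_1,u_2]) = (u_1,a_1,x_1,x_2,a_2,u_2).
\end{equation*}
To see that $\Psi$ lands in $L_6$, unpack the nesting $[x_1,x_2]\subseteq[a_1,a_2]\subseteq[u_1,u_2]\subseteq[0,1]$.
\begin{itemize}
\item The inclusion $[a_1,a_2]\subseteq[u_1,u_2]$ gives $u_1\le a_1$ and $a_2\le u_2$.
\item The inclusion $[x_1,x_2]\subseteq[a_1,a_2]$ gives $a_1\le x_1$ and $x_2\le a_2$.
\item Each interval being a closed subinterval of $[0,1]$ gives $x_1\le x_2$ and $0\le u_1$, $u_2\le 1$.
\end{itemize}
Chaining these yields
\begin{equation*}
0\le u_1\le a_1\le x_1\le x_2\le a_2\le u_2\le 1,
\end{equation*}
so $\Psi$ is well defined.

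\textbf{Inverse map.} Given $(y_1,\dots,y_6)\in L_6$, set
\begin{equation*}
\Psi^{-1}(y_1,\dots,y_6)=([y_3,y_4],[y_2,y_5],[y_1,y_6]).
\end{equation*}
Monotonicity $y_1\le\dots\le y_6$ shows that each of the three pairs is an interval in $\mathbb{I}$, and that they are nested: $[y_3,y_4]\subseteq[y_2,y_5]\subseteq[y_1,y_6]$. So the image is an ICUI granule. Both compositions are the identity by direct inspection, which proves bijectivity.

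\textbf{Main obstacle.} The only delicate point is the symbol $\subset$ in the definition of ICUI granules. If it means non-strict inclusion, the bijection is exactly onto $L_6$ as above. If it is read as strict inclusion, the image is only a proper subset of $L_6$, namely the tuples that are strictly increasing in at least one of the relevant endpoint pairs. In that case the statement would need to be adjusted, much as the paper did with $BUI^*$ in Proposition~\ref{prop:BUI_Atan}. I would therefore state explicitly that $\subset$ is interpreted as $\subseteq$, consistent with how CII and ICII granules were treated earlier (e.g.\ the degenerate interval $[x,x]$ being allowed). Under that reading the rest is routine.
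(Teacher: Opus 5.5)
Your proof is correct and takes the same approach as the paper. The paper gives no separate proof for this proposition, but its arguments for the CUI--$L_5$ and $n$-ICUI--$L_{2n}$ results list the nested endpoints from the outside in, just as your map $(u_1,a_1,x_1,x_2,a_2,u_2)$ does; your explicit inverse and your reading of $\subset$ as $\subseteq$ (consistent with the paper identifying 3-ICUI granules, defined with $\subseteq$, with ICUI granules) make your version more complete than the paper's one-line arguments.
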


Every CUI granule $(x,[a_1,a_2],[u_1,u_2])$  is an ICUI granule, taking as evaluation interval $[x^-,x^+]=[x,x]$.

\subsection{$n$-dimensional simplices: $\Ln$.}
Here, we cover other structures that require a fixed but arbitrary number of values $n\in\N$.
\subsubsection{$n$-dimensional fuzzy sets and weighting vectors}
For the sake of completeness, let us consider 
\begin{equation*}
    \mathbb{A}_n=\left\{(x_1,x_2,...,x_n)\in [0,1]^n:\sum \limits_{i=1}^nx_i\le 1\right\}
\end{equation*}
which is the basis for $n$-intuitionistic fuzzy sets and $n$-dimensional fuzzy sets \cite{SYL2010}. In addition, let us recall the set of weighting vectors
\begin{equation*}
    \Delta_n=\{(w_1,...,w_n)\tq \sum_{i=1}^nw_i=1\}.
\end{equation*}

\begin{theo}
    The following sets are bijective to each other: i) $L_n$, ii) $\mathbb{A}_n$, and iii) $\Delta_{n+1}$.  
\end{theo}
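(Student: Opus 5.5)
The plan is to write down explicit maps through the consecutive differences of the coordinates. Throughout we use the conventions $x_0:=0$ and $x_{n+1}:=1$. The weighting vectors in $\Delta_{n+1}$ are understood with nonnegative entries, as in the topological simplex recalled in the preliminaries.

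\textbf{Step 1: $L_n$ and $\Delta_{n+1}$.} Define $\varphi:L_n\to\Delta_{n+1}$ by
\begin{equation*}
\varphi(x_1,\dots,x_n)=(x_1-x_0,\;x_2-x_1,\;\dots,\;x_{n+1}-x_n).
\end{equation*}
Each entry is nonnegative exactly because $0\le x_1\le\dots\le x_n\le 1$. The entries sum to $x_{n+1}-x_0=1$ by telescoping. The inverse is the partial-sum map $w\mapsto (w_1,\;w_1+w_2,\;\dots,\;w_1+\dots+w_n)$. Its image is nondecreasing, since the $w_i\ge 0$, and it lies in $[0,1]$, since the partial sums are bounded by $\sum_{i=1}^{n+1}w_i=1$. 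Checking that the two compositions are the identity is immediate.

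This is the barycentric-coordinate description of $L_n$ given in the preliminaries, with $\lambda_i$ playing the role of a difference. Indeed, $\sum_i\lambda_i v_i$ has $k$-th coordinate $\lambda_{n-k+1}+\dots+\lambda_n$. So $\varphi$ is, up to reversing indices, the map sending a point to its barycentric coordinates. One could cite that description and the affine independence of $v_0,\dots,v_n$ instead.

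\textbf{Step 2: $\mathbb{A}_n$ and $\Delta_{n+1}$.} Define $\psi:\mathbb{A}_n\to\Delta_{n+1}$ by appending the slack coordinate:
\begin{equation*}
\psi(a_1,\dots,a_n)=\Bigl(a_1,\dots,a_n,\;1-\sum_{i=1}^n a_i\Bigr).
\end{equation*}
The last coordinate lies in $[0,1]$ precisely because $\sum_i a_i\le 1$. The inverse forgets the last coordinate. The composite $\psi^{-1}\circ\varphi$ then gives the bijection $L_n\to\mathbb{A}_n$ directly: it sends $x$ to its first $n$ increments. This generalizes the earlier propositions relating $L_2$ to $\mathbb{A}_2$ and $L_4$ to $\mathbb{A}_4$, although those used a different coordinate convention based on the complements $1-x_i$. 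We might remark on this difference of conventions.

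\textbf{Main obstacle.} The only real difficulty is one of definitions rather than computation. As written, $\Delta_n$ does not state $w_i\ge 0$, and $\mathbb{A}_n$ should be read with $x_i\in[0,1]$. We should fix these conventions explicitly, because nonnegativity is exactly what corresponds to monotonicity in $L_n$. Once that is done, all verifications are routine telescoping arguments.
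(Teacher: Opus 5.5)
Your proposal is correct and follows essentially the same route as the paper: the paper's bijection $L_n\to\Delta_{n+1}$ is the same consecutive-difference map (with $x_0=0$, $x_{n+1}=1$), and for $\mathbb{A}_n$ it writes the partial-sum map $\mathbb{A}_n\to L_n$ directly, which is exactly the inverse of your composite $\psi^{-1}\circ\varphi$. Your explicit check of the inverses and your remark that $\Delta_{n+1}$ must be read with $w_i\ge 0$ supply details the paper leaves implicit.
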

\begin{proof}
    It is clear that $L_n$ is bijective with the set $\Delta_{n+1}$ through the bijection $\phi:L_n\to\Delta_{n+1}$ defined by 
    \begin{equation*}    \phi(x_1,...,x_n)_i=x_i-x_{i-1}\ (i=1,...,n+1)  \pt (x_1,...,x_n)\in L_n
    \end{equation*}
    using the convention $x_{n+1}=1$ and $x_0=0$.

    On the other hand, we can consider the isomorphism $\psi:\mathbb{A}_n\to\Ln$  defined as follows: 
    \begin{gather*}        \psi(x_1,x_2,...,x_n)=\left(x_1,x_1+x_2,x_1+x_2+x_3,...,\sum \limits_{i=1}^nx_i\right) \pt(x_1,x_2,...,x_n)\in \mathbb{A}_n.
    \end{gather*}
\end{proof}
\subsubsection{Hesitant Cognitive Uncertain Information}
    Recently, Jin et al.  \cite{HesitCUI} have introduced the notions of typical hesitant monopolar cognitive interval information and typical hesitant cognitive uncertain information, where they use a finite set (hesitant) as the evaluation value.

    \begin{defi} \cite{HesitCUI}\label{defhesitCUI} Let $k\in\mathbb{N}$.  A (typical) $k$-hesitant monopolar cognitive uncertain information (HMCUI) granule is a pair with the form $(\{x_i\}_{i=1}^k,[a^-,a^+])$$\allowbreak\in L_k\times \mathbb{I}$. That the finite and ordered list $\{x_i\}_{i=1}^k$
    is called the (typical) hesitant evaluation value, $[a^-,a^+]$ is called the acceptance interval, and $a^-\le x_1 <x_2<\cdots <x_{k-1}<x_k\le a^+$.
    \end{defi}

    If $k=1$, then a 1-HMCUI granule reduces to a CII granule. Moreover, we can observe that given a $k$-HMCUI granule $(\{x_i\}_{i=1}^k,[a^-,a^+])$, we have $k$ different HCII granules, given by $(x_i,[a^-,a^+])$, $i=1,...,k$,  within the same acceptance interval and satisfying $x_1 <x_2<\cdots <x_{k-1}<x_k$.

    \begin{prop}There is an injection from the set of $k$-HMCUI granules to the set  $L_{k+2}$. 
    \end{prop}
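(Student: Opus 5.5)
The plan is to write down the obvious concatenation map and check that it is well defined and injective, in the same style as the proofs for CII, CUI and ICUI granules. Given a $k$-HMCUI granule $(\{x_i\}_{i=1}^k,[a^-,a^+])$, I would define
\begin{equation*}
\Phi\left(\{x_i\}_{i=1}^k,[a^-,a^+]\right)=(a^-,x_1,x_2,\dots,x_k,a^+)\in [0,1]^{k+2}.
\end{equation*}

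The first step is to show that $\Phi$ lands in $L_{k+2}$. By Definition \ref{defhesitCUI}, every coordinate lies in $[0,1]$, since $x_i\in[0,1]$ and $[a^-,a^+]\subseteq[0,1]$. The chain of inequalities $a^-\le x_1<x_2<\cdots<x_k\le a^+$ then shows that the resulting $(k+2)$-tuple is non-decreasing. Hence it belongs to $L_{k+2}$.

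The second step is injectivity. The granule is recovered from its image by reading off coordinates. The first and last coordinates give the acceptance interval $[a^-,a^+]$, and the middle $k$ coordinates give the ordered hesitant list. Since the hesitant evaluation is a finite set whose elements are listed increasingly, this ordered list determines the set uniquely. So two granules with the same image coincide, and $\pi_1$, $\pi_{k+2}$ together with $\pi_2,\dots,\pi_{k+1}$ provide a left inverse on the image.

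The only point needing care, and the reason the statement gives an injection rather than a bijection, is identifying the image. The image is the subset of $L_{k+2}$ with strict inequalities among the interior coordinates $2,\dots,k+1$. Points such as $(0,x,x,\dots,x,1)$ with $k\ge 2$ lie in $L_{k+2}$ but are not attained, because the hesitant values must be distinct. I would close by noting this, and also noting that for $k=1$ the map is precisely the bijection of Proposition \ref{prop:L3isom} between CII granules and $L_3$.
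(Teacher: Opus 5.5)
Your proposal is correct and takes the same approach the paper intends. The paper states this proposition without proof, but its proofs for CII and CUI granules use exactly this concatenation $(\{x_i\}_{i=1}^k,[a^-,a^+])\mapsto(a^-,x_1,\dots,x_k,a^+)\in L_{k+2}$, which is well defined by the chain $a^-\le x_1<\cdots<x_k\le a^+$ and injective because the coordinates can be read off. Your description of the image as the points of $L_{k+2}$ with strictly increasing interior coordinates, and your remark that $k=1$ recovers the bijection of Proposition \ref{prop:L3isom}, correctly explain why only an injection is claimed.
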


    \begin{defi} \cite{HesitCUI} Let $k\in\mathbb{N}$.  A (typical) $k$-hesitant cognitive uncertain information (HCUI) granule is a triad with the form $(\{x_i\}_{i=1}^k,[a^-,a^+],[u^-,u^+])$$\allowbreak\in L_k\times \mathbb{I}^2$  such that $\{x_i\}_{i=1}^k$
    is called the (typical) hesitant evaluation value,  $u^-\le a^-\le x_1 <x_2<\cdots <x_{k-1}<x_k\le a^+\le u^+$,  $[a^-,a^+]$ is called the acceptance interval and  $[0,1]\setminus [u^-,u^+]$ is called the unaccepted area.
    \end{defi}

    \begin{prop}There is an injection from the set of $k$-HCUI granules to the set  $L_{k+4}$. 
    \end{prop}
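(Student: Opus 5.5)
The injection is built by arranging the data of a $k$-HCUI granule as a single monotone sequence. Given a granule $(\{x_i\}_{i=1}^k,[a^-,a^+],[u^-,u^+])$, I define
\begin{equation*}
\Psi\big(\{x_i\}_{i=1}^k,[a^-,a^+],[u^-,u^+]\big)=(u^-,a^-,x_1,\dots,x_k,a^+,u^+)\in[0,1]^{k+4}.
\end{equation*}
This follows the same pattern as the proofs for CUI granules versus $L_5$ and $k$-HMCUI granules versus $L_{k+2}$: the outer interval is placed at the ends and the inner data are nested inside.

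The first step is to check that $\Psi$ lands in $L_{k+4}$. Every coordinate lies in $[0,1]$, because $[a^-,a^+],[u^-,u^+]\in\mathbb{I}$ and $\{x_i\}\in L_k$. The definition of a $k$-HCUI granule requires
\begin{equation*}
u^-\le a^-\le x_1<\cdots<x_k\le a^+\le u^+,
\end{equation*}
which is exactly the nondecreasing condition defining $L_{k+4}$, since a strict inequality in particular gives a non-strict one.

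The second step is injectivity. A granule is recovered from its image by reading off coordinates. Coordinates $1$ and $k+4$ give $[u^-,u^+]$, coordinates $2$ and $k+3$ give $[a^-,a^+]$, and coordinates $3,\dots,k+2$ give the hesitant list. Since $k$ is fixed, this projection is well defined, so two granules with the same image coincide.

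No step here is a real obstacle. The only point needing care is why the result is an injection rather than a bijection. Points of $L_{k+4}$ with some equality $x_i=x_{i+1}$ among the middle coordinates are not in the image, because of the strict inequalities in the definition. So the image is exactly the subset of $L_{k+4}$ where the middle $k$ coordinates are strictly increasing. I would state this explicitly to justify the wording of the proposition.
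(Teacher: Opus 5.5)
Your proposal is correct and follows the argument the paper intends. The paper states this proposition without proof, but its proof of the CUI--$L_5$ bijection uses the ordering $(u_1,a_1,x,a_2,u_2)$, and your map $(u^-,a^-,x_1,\dots,x_k,a^+,u^+)$ is the direct generalization. Your description of the image (the middle $k$ coordinates strictly increasing) is right; for $k=1$ that condition is vacuous, so the map is then a bijection, consistent with the paper's remark that 1-HCUI granules reduce to CUI granules.
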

    Note that if $k=1$, then a 1-HCUI granule reduces to a CUI granule. 

\subsubsection{Probabilistic Linguistic Term Sets}

    Linguistic Term Sets (LTSs) are the basis of linguistic decision-making \cite{Herrera00b}. An LTS can be defined as a totally ordered finite list of linguistic labels $S=\{s_0,s_1,...,s_n\}$, where each $s_i$ represents a possible value of a linguistic variable and $s_1\prec s_2\prec ... \prec s_n$, where $\prec$ is a certain total order. One of the structures based on LTSs is the so-called Probabilistic LTS.
    \begin{defi}[PLTS \cite{PLTS}]
    Let $S=\{s_1,...,s_n\}$ be a LTS. A probabilistic linguistic term set on $S$ (PLTS) is a set of pairs 
    \begin{equation*}
        \{(s_i,p_i)\in S\times\io\tq \sum_{i=1}^np_i\leq1\}
    \end{equation*}
    such that each $s\in S$ appear at most once.
    \end{defi}

Clearly, 
\begin{prop} Given an LTS, $S=\{s_1,...,s_{n}\}$, there is a bijection between the set of all the PLTSs on $S$ and the sets $\mathbb{A}_{n}$ and $\Delta_{n+1}$.
\end{prop}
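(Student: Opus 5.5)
The plan is to identify a PLTS on $S=\{s_1,\dots,s_n\}$ with a probability-type vector in $\mathbb{A}_n$. Then the bijections already established in the theorem relating $L_n$, $\mathbb{A}_n$ and $\Delta_{n+1}$ (the maps $\psi$ and $\phi$) will carry it to $\Delta_{n+1}$.

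First, I will fix the convention that makes a PLTS a function on $S$. Since each label appears at most once, a PLTS is a partial assignment $s_i\mapsto p_i$. I will complete it to a total assignment by setting $p_i=0$ for every label $s_i$ that does not occur; this matches the standard reading that an absent term carries probability zero. With this convention, the map
\begin{equation*}
\Theta:\{(s_i,p_i)\}\longmapsto (p_1,\dots,p_n)
\end{equation*}
sends each PLTS to a vector in $[0,1]^n$ with $\sum_i p_i\le 1$, that is, to an element of $\mathbb{A}_n$.

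Next, I will check that $\Theta$ is bijective. Its inverse sends $(p_1,\dots,p_n)\in\mathbb{A}_n$ to $\{(s_i,p_i): i=1,\dots,n\}$. This set lists each label exactly once and satisfies the defining inequality, so it is a PLTS. The two maps are mutually inverse because the labels are totally ordered, so the indexing $i$ is canonical.

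Finally, composing $\Theta$ with $\psi$ gives the correspondence with $L_n$, and composing further with $\phi$ gives the correspondence with $\Delta_{n+1}$. Explicitly,
\begin{equation*}
(p_1,\dots,p_n)\mapsto \Bigl(p_1,\dots,p_n,\,1-\sum_{i=1}^n p_i\Bigr).
\end{equation*}
The extra coordinate $1-\sum_i p_i$ can be read as the unassigned (ignorance) mass.

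The only real obstacle is the identification step. Literally, a PLTS listing $(s_i,0)$ differs as a set from one that omits $s_i$, and this would break injectivity. I will resolve this by declaring the two representations equal, that is, by treating PLTSs as functions $S\to[0,1]$ with zero for absent terms. I will state this convention explicitly at the start of the proof.
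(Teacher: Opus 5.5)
Your proposal is correct and follows the paper's route. The paper simply asserts the result (``Clearly'') and then remarks that one may append $p_{n+1}=1-\sum_{i=1}^n p_i$, which is exactly the map $(p_1,\dots,p_n)\mapsto\bigl(p_1,\dots,p_n,1-\sum_{i=1}^n p_i\bigr)$ you obtain as $\phi\circ\psi$. Your explicit convention identifying absent labels with zero probability is a useful clarification the paper leaves implicit (its definition already sums over all $n$ indices), and it is what makes the map to $\mathbb{A}_n$ injective.
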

Note that in the literature, the PLTS definition allows that $\sum_{i=1}^np_i<1$, but we could also consider, without restriction, that such a sum is equal to $1$, by including one more dimension to stand for the remaining uncertainty, i.e., $p_{n+1}=1-\sum_{i=1}^np_i$.

\begin{theo}[Structural Equivalence of $L_n$-Fuzzy Sets]
The space of $n$-dimensional fuzzy sets $\mathcal{F}_{L_n}(X)$ is isomorphic to the following uncertainty modeling frameworks: (i) weighting vectors on a set $X$, (ii) $n$-intuitionistic fuzzy sets, (iii) Probabilistic Linguistic Term Sets \cite{PLTS}, (iv) $n$-polygonal fuzzy numbers \cite{polygonal}.

\end{theo}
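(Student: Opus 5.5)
The plan is to prove all four equivalences pointwise. Since an object of each framework on $X$ is a map from $X$ into a fixed value set, a bijection $\theta$ between $L_n$ and that value set lifts to the bijection $A\mapsto \theta\circ A$ on maps defined on $X$, and its inverse lifts to $B\mapsto \theta^{-1}\circ B$. When $\theta$ is order-preserving for the order we put on the target, the lift is also a lattice isomorphism for the pointwise orders. The main step is therefore to identify, for each of (i)--(iv), the value set and a bijection with $L_n$.

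For (i) and (ii) I will use the preceding theorem directly. It gives the bijection $\phi:L_n\to\Delta_{n+1}$, $\phi(x)_i=x_i-x_{i-1}$ with $x_0=0$ and $x_{n+1}=1$, whose inverse is the cumulative-sum map $w\mapsto (w_1,w_1+w_2,\dots,w_1+\dots+w_n)$. It also gives $\psi:\mathbb{A}_n\to L_n$, partial sums. Composing pointwise identifies $\mathcal{F}_{L_n}(X)$ with $\Delta_{n+1}$-valued maps (weighting vectors on $X$) and with $\mathbb{A}_n$-valued maps ($n$-intuitionistic fuzzy sets).

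For (iii) I will invoke the proposition on PLTSs. A PLTS on $S=\{s_1,\dots,s_n\}$ is determined by $(p_1,\dots,p_n)\in\mathbb{A}_n$, with $p_i=0$ for absent terms, and padding $p_{n+1}=1-\sum p_i$ puts it in $\Delta_{n+1}$. Composing with $\psi$ gives the cumulative distribution $(p_1,p_1+p_2,\dots)\in L_n$, so PLTS-valued maps on $X$ correspond to $n$-dimensional fuzzy sets. For (iv) I will parametrize an $n$-polygonal fuzzy number by its $n$ ordered abscissae $a_1\le\dots\le a_n$ in $[0,1]$, with the ordinates fixed at the standard levels as in \cite{polygonal}. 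This parallels how triangular numbers correspond to $L_3$ and trapezoidal numbers to $L_4$. The tuple $(a_1,\dots,a_n)$ is exactly an element of $L_n$, and the correspondence is bijective.

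I expect the main obstacle to be the meaning of ``isomorphic'' beyond plain bijection, together with the normalization conventions in (iii) and (iv). The set $\Delta_{n+1}$ carries no native lattice order, and the PLTS order is not canonical. I would therefore transport the componentwise order of $L_n$ through $\phi$ and $\psi$, which gives first-order stochastic dominance on the cumulative sums, and show that this makes the lifts lattice isomorphisms. For (iv) I need to fix a convention so that the correspondence is genuinely bijective, since degenerate polygons can collapse vertices. Keeping all ordered tuples, repeats included, matches $L_n$ exactly, because $L_n$ allows equalities.
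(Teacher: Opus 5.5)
Your proposal is correct and follows the paper, which gives no separate argument for this summary theorem: (i)--(iii) are exactly the pointwise lifts of the bijections $\phi$ and $\psi$ from the preceding theorem and of the PLTS proposition, while (iv) rests only on the citation. One point needs tightening in (iv): with fixed levels, an $n$-polygonal number has a left and a right abscissa at each level, and the paper's own Section~4 corollary pairs polygonal sets with $n$-ICUI granules, i.e.\ with $L_{2n}$, so identifying them with $L_n$ via $n$ abscissae is a re-indexing convention that you should state explicitly rather than attribute to \cite{polygonal}.
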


\section{A new preference structure based on $L_n$}\label{sec:n-ICUI}
Based on the discussion of the previous section, we can see that many of the preference structures in the literature are based on nested intervals (or $L_n$ for some $n\in\N$). In this line, we can generalize all of them into a common structure.

\begin{defi}[$n$-dimensional Interval (type) cognitive uncertain information] Let $S=\{s_1,...,s_n\}$ be a LTS such that $s_1\prec s_2\prec ... \prec s_n$. 
An $n$-dimensional interval (type) cognitive uncertain information (n-ICUI) granule is an n-tuple with the form  $([x_1^-,x_1^+],...,[x_n^-,x_n^+])\in  \mathbb{I}^n$  such that  $[x_n^-,x_n^+]\subseteq...\subseteq [x_2^-,x_2^+] \subseteq[x_1^-,x_1^+]\subseteq [0,1] $. The interval  $[x_i^-,x_i^+]$ is called  interval with  level of acceptance $s_i$, for $i=1,...,n$, whereas $[0,1]\setminus [x_1^-,x_1^+]$ is called the unaccepted area.
\end{defi}
Thus, there is an isomorphism between this structure and our simplicial structure.
\begin{theo}
    The set of n-ICUI granules is bijective with the space $L_{2n}$
\end{theo}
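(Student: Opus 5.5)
The plan is to write down the explicit map suggested by the ICUI and CUI cases. Nested intervals unfold into a single monotone chain. Given an n-ICUI granule $([x_1^-,x_1^+],\dots,[x_n^-,x_n^+])$, define
\begin{equation*}
\Phi([x_1^-,x_1^+],\dots,[x_n^-,x_n^+]) = (x_1^-,x_2^-,\dots,x_n^-,x_n^+,x_{n-1}^+,\dots,x_1^+)\in[0,1]^{2n}.
\end{equation*}
The first step is to check that $\Phi$ lands in $L_{2n}$. The inclusion $[x_{i+1}^-,x_{i+1}^+]\subseteq[x_i^-,x_i^+]$ means exactly $x_i^-\le x_{i+1}^-$ and $x_{i+1}^+\le x_i^+$. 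Each $[x_n^-,x_n^+]$ is an element of $\mathbb{I}$, so $x_n^-\le x_n^+$, which links the two halves of the tuple. Finally, $[x_1^-,x_1^+]\subseteq[0,1]$ gives the bounds $0\le x_1^-$ and $x_1^+\le 1$. Chaining these gives
\begin{equation*}
x_1^-\le\cdots\le x_n^-\le x_n^+\le\cdots\le x_1^+,
\end{equation*}
which is the defining condition of $L_{2n}$.

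The second step is the inverse. Given $(y_1,\dots,y_{2n})\in L_{2n}$, set $[x_i^-,x_i^+]=[y_i,y_{2n+1-i}]$ for $i=1,\dots,n$. Since $i\le 2n+1-i$ for $i\le n$, monotonicity gives $y_i\le y_{2n+1-i}$, so each of these is a genuine interval in $\mathbb{I}$. The inequalities $y_i\le y_{i+1}$ and $y_{2n-i}\le y_{2n+1-i}$ give the nesting $[x_{i+1}^-,x_{i+1}^+]\subseteq[x_i^-,x_i^+]$. Composing the two maps in either order returns the input coordinatewise, so $\Phi$ is a bijection.

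The main point needing care is the junction between the left and right halves, namely the inequality $x_n^-\le x_n^+$. This comes only from the innermost interval being a closed interval, and I will state it explicitly. It is the same mechanism as in the $L_4$ proof for ICII granules and the $L_6$ case of ICUI granules. As a remark, I will note that for $n=2$ and $n=3$ the map $\Phi$ recovers those earlier bijections, and that taking a degenerate innermost interval $x_n^-=x_n^+$ recovers the CII and CUI embeddings into $L_{2n}$.
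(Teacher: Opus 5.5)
Your proof is correct and follows the paper's idea: unfold the nested intervals into a single monotone $2n$-tuple and observe that it lies in $L_{2n}$. You go further by writing the inverse explicitly. Your ordering $(x_1^-,\dots,x_n^-,x_n^+,\dots,x_1^+)$ matches the definition's nesting $[x_n^-,x_n^+]\subseteq\cdots\subseteq[x_1^-,x_1^+]$, whereas the paper's one-line proof writes $(x_n^-,\dots,x_1^-,x_1^+,\dots,x_n^+)$, which has the indices reversed.
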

\begin{proof}
Since $[x_n^-,x_n^+] \subseteq ... \subseteq [x_2^-,x_2^+]\subseteq [x_1^-,x_1^+] $ then we can conclude that the sequence $(x_n^-,...,x_1^-,x_1^+,...,,x_n^+)\in L_{2n}$. 
\end{proof}
In the particular case that $[x_n^-,x_n^+]=[x,x]$,  we can introduce the concept of $n$-ICII, which is equivalent to $L_{2n-1}$.  
\begin{cor}
    The set of n-ICUI granules whose $n$-th interval $[x_n^-,x_n^+]$ is degenerated is  bijective with the space $L_{1+2(n-1)}$.
\end{cor}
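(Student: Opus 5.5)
The plan is to restrict the bijection from the preceding theorem (n-ICUI granules $\leftrightarrow L_{2n}$) to the subset of granules with degenerate innermost interval, and to check that this subset corresponds exactly to a copy of $L_{2n-1}$. Recall that the map sends a granule $([x_1^-,x_1^+],\dots,[x_n^-,x_n^+])$ to
\[
(x_1^-,x_2^-,\dots,x_n^-,x_n^+,\dots,x_2^+,x_1^+)\in L_{2n}.
\]
The nesting $[x_n^-,x_n^+]\subseteq\cdots\subseteq[x_1^-,x_1^+]$ is precisely the monotonicity condition of $L_{2n}$. This is the order in which the tuple should be written; the listing in the displayed proof appears to have the inner and outer indices swapped.

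\textbf{Step 1 (the forward map).} Impose $x_n^-=x_n^+=x$. The two middle coordinates of the image (positions $n$ and $n+1$) then coincide, so I drop one of them. This gives
\[
\Phi(G)=(x_1^-,\dots,x_{n-1}^-,x,x_{n-1}^+,\dots,x_1^+),
\]
a tuple of length $2(n-1)+1=2n-1$. Its entries are non-decreasing because of the nesting and the conditions $x_{n-1}^-\le x\le x_{n-1}^+$. So $\Phi(G)\in L_{1+2(n-1)}$. In the paper's language, $\Phi$ is the original bijection followed by the face map that deletes one of two equal adjacent coordinates.

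\textbf{Step 2 (the inverse).} Given $(y_1,\dots,y_{2n-1})\in L_{2n-1}$, set $[x_i^-,x_i^+]=[y_i,y_{2n-i}]$ for $i=1,\dots,n$. The middle index $i=n$ gives $[y_n,y_n]$, which is degenerate. Monotonicity of the $y_j$ gives $y_i\le y_{i+1}$ and $y_{2n-i-1}\le y_{2n-i}$, so the intervals are nested inside $[0,1]$. Hence the result is an n-ICUI granule with degenerate $n$-th interval.

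\textbf{Step 3 (inverses).} Checking that the two assignments compose to the identity in both directions is immediate from the formulas.

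The only point needing care is the index bookkeeping in Step 2. Specifically, the middle coordinate $y_n$ must be used for both endpoints of the innermost interval, and the remaining coordinates must pair up symmetrically as $(y_i,y_{2n-i})$. For $n=1$ the statement reduces to the trivial bijection between degenerate intervals $[x,x]$ and $L_1=[0,1]$. For $n=2,3$ it recovers the earlier bijections of CII with $L_3$ and of CUI with $L_5$, which serves as a sanity check.
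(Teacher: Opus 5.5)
Your proof is correct and follows the route the paper intends. The paper states this corollary without proof, as an immediate consequence of the $L_{2n}$ bijection restricted to granules with $x_n^-=x_n^+$ (merging the two equal middle coordinates). Your $\Phi$ and its explicit inverse are exactly that argument, and you rightly order the tuple as $(x_1^-,\dots,x_n^-,x_n^+,\dots,x_1^+)$, which corrects the index reversal in the paper's displayed sequence.
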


Note that for an LTS $S_2=\{ \text{Acceptable}, \text{Perfect evaluation value} \}$, a  2-ICUI is just an ICII, whereas a 2-ICUI  with $x_2^-=x_2^+$ is just a CII. For an LTS $S_3=\{  \text{Acceptable}, \text{Almost perfect evaluation value},  \text{Perfect evaluation value}\}$, a 3-ICUI is just an ICUI, and a 3-ICUI with $x_3^-=x_3^+$ is just a CUI.

In general, we can think about an $n$-ICUI  as a preference structure for which an evaluator or expert assesses the evaluation value
(for an object under evaluation) as nested intervals for which he/she has a certain confidence. Therefore, the evaluator provides a sequence of nested intervals  $[x_i^-,x_i^+]$ , $i=1,...,n$ and s/he accepts each value that fallsinto the interval $[x_i^-,x_i^+]$  with  level of acceptance $s_i$. Note that this structure can be elicited using the Deck of Card Membership Function introduced in \cite{DoCMF}. Indeed, this structure generalizes the notion of $n$-polygonal fuzzy numbers \cite{polygonal} as well as the concept of step fuzzy numbers \cite{StepFN}:
\begin{cor}
    Given a finite and ordered set of $\alpha$-levels $A\subset[0,1]$, the set of step fuzzy sets on $A$ and the set of polygonal fuzzy sets are bijective to the set of n-ICUI granules.
\end{cor}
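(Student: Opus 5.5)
The plan is to read a step fuzzy set or polygonal fuzzy set on a finite ordered set of $\alpha$-levels $A=\{\alpha_1<\alpha_2<\dots<\alpha_n\}\subset[0,1]$ through its $\alpha$-cuts, and then invoke the preceding theorem identifying $n$-ICUI granules with $L_{2n}$. We assume the standard convention that such a fuzzy number $\mu$ on $[0,1]$ is determined by its cuts $[\mu]_{\alpha_i}=[x_i^-,x_i^+]$, $i=1,\dots,n$. For a step fuzzy number, membership is constant between consecutive levels, so the cuts at the levels of $A$ determine $\mu$ completely. For an $n$-polygonal fuzzy number, $\mu$ is piecewise linear with breakpoints at the levels of $A$, so the cut endpoints are again exactly the vertices of the polygon.

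First I would define the map $\Phi$ sending $\mu$ to $([x_1^-,x_1^+],\dots,[x_n^-,x_n^+])$. Well-definedness follows from the monotonicity of $\alpha$-cuts: $\alpha_i<\alpha_j$ implies $[\mu]_{\alpha_j}\subseteq[\mu]_{\alpha_i}$. This gives the nested chain $[x_n^-,x_n^+]\subseteq\dots\subseteq[x_1^-,x_1^+]\subseteq[0,1]$ required in the definition of an $n$-ICUI granule, with the level $\alpha_i$ playing the role of the linguistic acceptance label $s_i$ (the order $s_1\prec\dots\prec s_n$ matching $\alpha_1<\dots<\alpha_n$).

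Next I would build the inverse. Given a nested chain of intervals, define a step fuzzy set by
\[
\mu(t)=\max\{\alpha_i : t\in[x_i^-,x_i^+]\},
\]
and $\mu(t)=0$ when no such interval contains $t$. For the polygonal case, I would instead interpolate linearly between consecutive vertices $(x_i^\pm,\alpha_i)$. I would then check that the cuts of the reconstructed $\mu$ at each $\alpha_i$ return $[x_i^-,x_i^+]$, and that applying $\Phi$ to a given $\mu$ and reconstructing recovers $\mu$; both checks rest on nestedness. Composing with the theorem above then places both families in bijection with $L_{2n}$, and hence with each other.

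The main obstacle is pinning down the definitions in \cite{StepFN} and \cite{polygonal} precisely enough that the cuts at the levels of $A$ really are free parameters. The issues are whether degenerate or equal consecutive intervals are allowed, whether the top level must be $\alpha_n=1$ (normality), and whether cuts are taken as closed or open intervals. Strict nesting would only change $L_{2n}$ to a subset, such as the strict part of the simplex. I would fix the convention of closed cuts with non-strict nesting, matching $L_{2n}$, and remark that other conventions yield bijections onto the corresponding faces or the interior.
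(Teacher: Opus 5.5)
Your proposal is correct and follows the route the paper has in mind. The paper gives no separate proof: it relies on reading the nested intervals $[x_i^-,x_i^+]$ as the $\alpha_i$-cuts of a step (or polygonal) fuzzy set, as in its later example with $0<\alpha_1<\alpha_2<\alpha_3\le 1$, together with the theorem identifying $n$-ICUI granules with $L_{2n}$. Your explicit inverse $\mu(t)=\max\{\alpha_i : t\in[x_i^-,x_i^+]\}$ and your convention check supply what is left implicit there. It is worth stating two points outright: the step-case reconstruction needs $\alpha_1>0$ (if $0\in A$, the bottom cut $[x_1^-,x_1^+]$ is not recovered), and in the polygonal case the shape below level $\alpha_1$ is fixed by your convention $\mu=0$ off $[x_1^-,x_1^+]$, not by extra support data.
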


\section{One Simplicial set to unify them all}\label{sec:simplex}
In the previous sections, we have discussed how many classical and new preference structures are mathematically equivalent to $n$-dimensional fuzzy sets, i.e., the set of mappings $X\to L_n$. This section is devoted to unifying all these structures via the notion of a simplicial set.

The idea is to highlight the interrelation between the different levels of $L_n$, looking for an internal simplicial structure.
For this, we consider the face maps $d_i:L_{n}\to L_{n-1}$, $0\le i\le n-1$,
defined by
\begin{equation*}
    d_i(x_1,...,x_n)= \begin{cases}
(x_{2},...,x_n),&i=0,\\
(x_1,...,x_{i},x_{i+2},...,x_n),&0<i<n-1,\\
(x_{1},...,x_{n-1}),&i=n-1
\end{cases}
\end{equation*}
 and the degeneracy maps  $s_j:L_{n}\to L_{n+1}$ given by $\ 0\le j\le n-1$
\begin{equation*}
    s_j(x_1,...,x_n)= 
(x_1,...,, x_j,x_{j+1},x_{j+1},x_{j+2},...,x_n),\ 0\le j\le n-1.
\end{equation*}

\begin{figure}[htbp]
    \centering
    \begin{subfigure}[b]{0.45\textwidth}
        \centering
        \begin{tikzpicture}[scale=3] 
            \draw[->] (-0.1,0) -- (1.2,0) node[right] {$x_1$};
            \draw[->] (0,-0.1) -- (0,1.2) node[above] {$x_2$};
            
            \fill[blue!10, draw=blue, thick] (0,0) -- (0,1) -- (1,1) -- cycle;
            \node[blue] at (0.15,0.85) {$L_2$};
            
            \filldraw (0,0) circle (0.5pt) node[anchor=north east] {$v_0(0,0)$};
            \filldraw (0,1) circle (0.5pt) node[anchor=east] {$v_1(0,1)$};
            \filldraw (1,1) circle (0.5pt) node[anchor=west] {$v_2(1,1)$};
            
            \coordinate (X) at (0.3, 0.7);
            \filldraw[black] (X) circle (0.7pt) node[anchor=south west] {$x(x_1, x_2)$};
            
            \draw[dashed, ->, red, thick] (X) -- (0.3, 0) node[below] {$d_1(x) = x_1$};
            \draw[dashed, ->, green!50!black, thick] (X) -- (0, 0.7) node[left] {$d_0(x) = x_2$};
        \end{tikzpicture}
        \caption{\small Representation of $d_0, d_1$ in $L_2$.}
        \label{fig:l2_repro}
    \end{subfigure}
    \hfill
    \begin{subfigure}[b]{0.45\textwidth}
        \centering
        \begin{tikzpicture}[scale=2.6] 
            \begin{scope}[shift={(0,0)}, x={(-0.5cm,-0.3cm)}, y={(1cm,0cm)}, z={(0cm,1cm)}]
                
                \coordinate (O) at (0,0,0); \coordinate (X) at (1,0,0);
                \coordinate (Y) at (0,1,0); \coordinate (Z) at (0,0,1);
                \coordinate (XY) at (1,1,0); \coordinate (XZ) at (1,0,1);
                \coordinate (YZ) at (0,1,1); \coordinate (XYZ) at (1,1,1);
            
                \draw[gray!20] (O) -- (X) -- (XY) -- (Y) -- cycle;
                \draw[gray!20] (Z) -- (XZ) -- (XYZ) -- (YZ) -- cycle;
                \draw[gray!20] (O) -- (Z); \draw[gray!20] (X) -- (XZ);
                \draw[gray!20] (Y) -- (YZ); \draw[gray!20] (XY) -- (XYZ);
                
                \draw[->, thick] (O) -- (1.3,0,0) node[left] {$x_1$};
                \draw[->, thick] (O) -- (0,1.3,0) node[right] {$x_2$};
                \draw[->, thick] (O) -- (0,0,1.3) node[above] {$x_3$};
            
                \coordinate (v0) at (0,0,0);
                \coordinate (v1) at (0,0,1);
                \coordinate (v2) at (0,1,1);
                \coordinate (v3) at (1,1,1);
            
                \fill[red!10, opacity=0.4] (v0) -- (v1) -- (v3) -- cycle; 
                \fill[red!20, opacity=0.4] (v0) -- (v1) -- (v2) -- cycle; 
                \fill[red!5, opacity=0.3] (v1) -- (v2) -- (v3) -- cycle;
            
                \draw[thick, red!80!black] (v0) -- (v1);
                \draw[thick, red!80!black] (v1) -- (v2);
                \draw[thick, red!80!black] (v2) -- (v3);
                \draw[thick, red!80!black, dashed] (v3) -- (v0); 
                \draw[thick, red!80!black, dashed] (v0) -- (v2);
                \draw[thick, red!80!black] (v1) -- (v3);
                
                \coordinate (Xpt) at (0.2, 0.5, 0.9);
                \filldraw[black] (Xpt) circle (0.015) node[anchor=south west, font=\tiny] {$x(x_1, x_2, x_3)$};

                \coordinate (PX) at (0.2, 0.25, 0.8);
                \draw[->, blue, thick, dashed] (Xpt) -- (PX) node[midway, left, font=\tiny] {$d_1$};
                \filldraw[blue] (PX) circle (0.015) node[anchor=north east, font=\tiny] {$d_1(x) \in L_2$};

                \draw[blue!40, very thin] (v0) -- (v1) -- (v3) -- cycle;
                
                \node[below left, font=\tiny] at (v0) {$v_0$};
                \node[left, font=\tiny] at (v1) {$v_1$};
                \node[right, font=\tiny] at (v2) {$v_2$};
                \node[above, font=\tiny] at (v3) {$v_3$};
            \end{scope}
        \end{tikzpicture}
        \caption{\small Representation of $d_1$ in $L_3$.}
        \label{fig:l3_repro}
    \end{subfigure}
    \caption{Visual comparison of projections in $L_2$ and $L_3$.}
\end{figure}
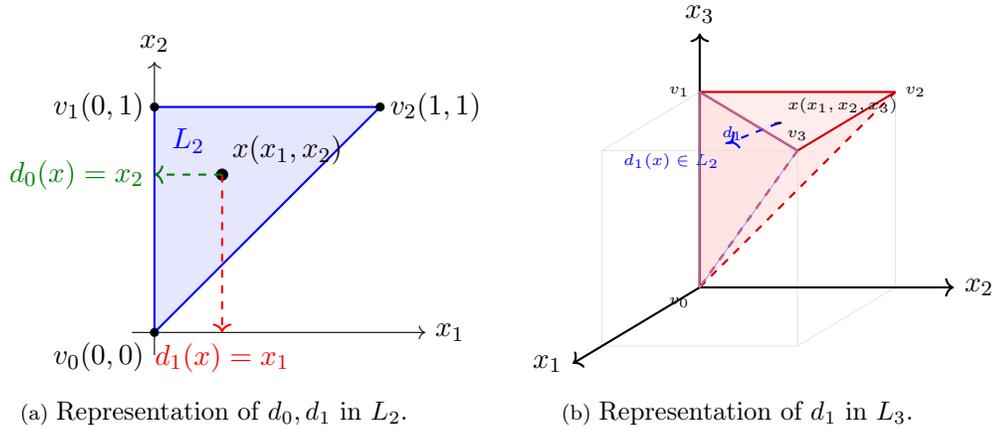

\begin{figure}[htbp]
    \centering
    \begin{subfigure}[b]{0.48\textwidth}
        \centering
        \begin{tikzpicture}[scale=2.8]
            \draw[->] (-0.1,0) -- (1.2,0) node[right] {$x_1$};
            \draw[->] (0,-0.1) -- (0,1.2) node[above] {$x_2$};
            
            \fill[blue!5, draw=blue!30, thin] (0,0) -- (0,1) -- (1,1) -- cycle;
            \node[blue!40] at (0.15,0.85) {$L_2$};
            
            \draw[very thick, black] (0,-0.2) -- (1,-0.2);
            \node[below] at (0.5,-0.2) {$L_1$};

            \filldraw (0,0) circle (0.5pt) node[anchor=north east] {$v_0$};
            \filldraw (0,1) circle (0.5pt) node[anchor=east] {$v_1$};
            \filldraw (1,1) circle (0.5pt) node[anchor=west] {$v_2$};
            
            \coordinate (X1) at (0.6, -0.2);
            \coordinate (SX) at (0.6, 0.6);
            \filldraw[black] (X1) circle (0.7pt) node[below] {$x$};
            
            \draw[red, ultra thick] (0,0) -- (1,1);
            \draw[dashed, ->, red, thick] (X1) -- (0.6, 0.55);
            \filldraw[red] (SX) circle (0.8pt) node[anchor=south west] {   \textbf{ } $s_0(x)$};
            
            \node[red, rotate=45, above] at (0.4,0.4) {\tiny Image of $s_0$};
        \end{tikzpicture}
        \caption{\small Degenaracy $s_0: L_1 \hookrightarrow L_2$ embeds $L_1$ as diagonal.}
        \label{fig:deg_s0_2d}
    \end{subfigure}
    \hfill
    \begin{subfigure}[b]{0.48\textwidth}
        \centering
        \begin{tikzpicture}[scale=2.6] 
            \begin{scope}[shift={(0,0)}, x={(-0.5cm,-0.3cm)}, y={(1cm,0cm)}, z={(0cm,1cm)}]
                \draw[gray!15] (0,0,0) -- (1,0,0) -- (1,1,0) -- (0,1,0) -- cycle;
                \draw[gray!15] (0,0,1) -- (1,0,1) -- (1,1,1) -- (0,1,1) -- cycle;
                \draw[gray!15] (0,0,0) -- (0,0,1); \draw[gray!15] (1,0,0) -- (1,0,1);
                \draw[gray!15] (0,1,0) -- (0,1,1); \draw[gray!15] (1,1,0) -- (1,1,1);
                
                \draw[->, thick] (0,0,0) -- (1.3,0,0) node[left] {$x_1$};
                \draw[->, thick] (0,0,0) -- (0,1.3,0) node[right] {$x_2$};
                \draw[->, thick] (0,0,0) -- (0,0,1.3) node[above] {$x_3$};
            
                \coordinate (v0) at (0,0,0); \coordinate (v1) at (0,0,1);
                \coordinate (v2) at (0,1,1); \coordinate (v3) at (1,1,1);
            
                \draw[thick, red!20] (v0) -- (v1) -- (v2) -- (v3) -- cycle;
                \draw[thick, red!20] (v1) -- (v3);
                
                \fill[blue!30, opacity=0.6] (v0) -- (v1) -- (v3) -- cycle;
                \draw[blue, thick] (v0) -- (v1) -- (v3) -- cycle;
                \node[blue, font=\tiny, rotate=55] at (0.2, 0.2, 0.6) {Im $s_0$};

                \fill[green!50!black!30, opacity=0.4] (v0) -- (v2) -- (v3) -- cycle;
                \draw[green!50!black, thick] (v0) -- (v2) -- (v3) -- cycle;
                \node[green!50!black, font=\tiny, rotate=-15] at (0.4, 0.6, 0.7) {Im $s_1$};
                
                \node[below left, font=\tiny] at (v0) {$v_0$};
                \node[left, font=\tiny] at (v1) {$v_1$};
                \node[right, font=\tiny] at (v2) {$v_2$};
                \node[above, font=\tiny] at (v3) {$v_3$};
            \end{scope}
        \end{tikzpicture}
        \caption{\small Degeneracy faces $s_0, s_1: L_2 \hookrightarrow L_3$.}
        \label{fig:deg_s01_3d}
    \end{subfigure}
    \caption{Visualizing the embedding of lower-dimensional order polytopes via degeneracy maps.}
\end{figure}
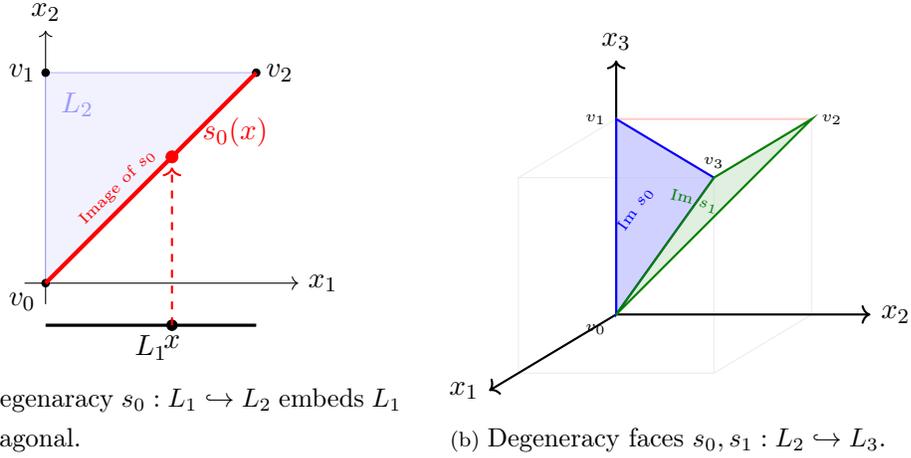

We can represent the structure consisting of $ L_\infty$ together  with these mappings as follows:
\[ \begin{tikzcd}
L_\bullet =&\cdots L_4  \rar[shift left=1.5] \rar[shift left=4.5] \rar[shift right=1.5] \rar[shift right=4.5]  & 
L_3
\rar[shift left=3,"d_2"] \rar[shift right=3,"d_0"'] \rar
\lar[shift left=3] \lar[shift right=3] \lar & 
L_2 \rar[shift left=2, "d_1"] \rar[shift right=2, "d_0"']  \lar[shift left=1.5] \lar[shift right=1.5] 
&
L_1=[0,1]. \lar["s_0" description]
\end{tikzcd} \]
Consequently, we obtain the following result:
\begin{theo}
The sequence of sets $L_\bullet=\{L_n\}_{n\geq0}$ together with th previously defined mappings $d_i:L_{n}\to L_{n-1}$, $0\le i\le n-1$, and  $s_j:L_{n}\to L_{n+1}$, $\ 0\le j\le n-1$, is a simplicial lattice whose $n-$simplices are $L_n$.
\end{theo}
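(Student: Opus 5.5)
The plan is to avoid checking the simplicial identities coordinate by coordinate. Instead I will use the description of $L_n$ given in Section \ref{sec:prelim}: $L_n$ is the set of order-preserving maps $x:[n]\to[0,1]$. Once $d_i$ and $s_j$ are written as precomposition with order-preserving maps between finite chains, the identities follow from the cosimplicial identities in the category of finite chains.

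First I fix the indexing. Face and degeneracy operators on $L_n$ are indexed by $0,\dots,n-1$, so $L_n$ must be placed in simplicial degree $n-1$; that is, I set $X_{k}=L_{k+1}$. The formally required $L_0$ (the one-point set consisting of the empty tuple) can then be kept or dropped as a harmless convention. I will state this shift explicitly, because the statement's phrase ``$n$-simplices are $L_n$'' is only correct up to this relabelling. I regard this bookkeeping as the main obstacle: the identities $d_is_j=\mathrm{id}$ for $i=j,j+1$ hold only with the offsets aligned correctly.

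Next, for $0\le i\le n-1$ let $\delta^i:[n-1]\to[n]$ be the order-preserving injection whose image omits $i+1$. For $0\le j\le n-1$ let $\sigma^j:[n+1]\to[n]$ be the order-preserving surjection that takes the value $j+1$ twice. Comparing with the definitions gives
\[
d_i(x)=x\circ\delta^i,\qquad s_j(x)=x\circ\sigma^j .
\]
These composites are order preserving, so the maps are well defined into $L_{n-1}$ and $L_{n+1}$; concretely, deleting or repeating an entry keeps a nondecreasing tuple nondecreasing. The maps $\delta^i,\sigma^j$ are, after the shift $k\mapsto k+1$ on labels, exactly the standard coface and codegeneracy maps of the simplex category. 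They therefore satisfy the cosimplicial identities, for example $\delta^j\delta^i=\delta^i\delta^{j-1}$ for $i<j$, $\sigma^j\delta^i=\mathrm{id}$ for $i=j,j+1$, and so on. Precomposition reverses the order of composition, so these identities turn into precisely the simplicial identities listed in the definition. Alternatively, each identity can be spot-checked directly, e.g. $d_{j}s_j(x)$ deletes one of the two copies of $x_{j+1}$ and returns $x$.

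Finally, for the lattice part, recall that meet and join in $(L_n,\le^n)$ are componentwise $\min$ and $\max$; the componentwise min or max of two nondecreasing tuples is again nondecreasing. Every $d_i$ and $s_j$ only deletes or duplicates coordinates, so each commutes with componentwise $\min$ and $\max$ and also preserves the bounds $(0,\dots,0)$ and $(1,\dots,1)$. Hence each $L_n$ is a lattice and all structure maps are lattice homomorphisms, which shows that $L_\bullet$ is a simplicial lattice.
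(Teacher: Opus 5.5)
Your proposal is correct, and it takes a different route from the paper. The paper's proof is only the remark that checking the simplicial identities and lattice conditions is routine, which implicitly means verifying the coordinate formulas for $d_i$ and $s_j$ case by case. You instead write $d_i(x)=x\circ\delta^i$ and $s_j(x)=x\circ\sigma^j$ and pull the identities back from the cosimplicial identities of the simplex category. In effect you identify the shifted $L_\bullet$ with the nerve of the poset $([0,1],\le)$, whose $k$-simplices are chains $x_0\le\cdots\le x_k$. This buys three things over the paper's route. First, it explains why the identities hold instead of checking them. Second, it avoids off-by-one slips in the case distinctions; for instance, the degeneracy identity comes out as $s_is_j=s_{j+1}s_i$ for $i\le j$, which is the paper's form $s_is_j=s_js_{i-1}$ ($i>j$) after renaming indices, a step you should state explicitly. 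Third, it works verbatim with $[0,1]$ replaced by any lattice, or by any poset for the simplicial-set part alone. The paper's direct check is more elementary and needs no background on the simplex category. Your treatment of the indexing is a genuine correction to the statement. Since $L_n$ carries only $n$ face maps and $n$ degeneracies, it must sit in simplicial degree $n-1$, which is consistent with the paper's own diagram ending at $L_1=[0,1]$. So the literal phrase ``$\{L_n\}_{n\ge0}$ with $n$-simplices $L_n$'' is off by one. If you keep $L_0$, it is an augmentation in degree $-1$, not a level of the simplicial set in the sense of the paper's definition.
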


\begin{proof}
It is routine to check simplicial identities and lattice conditions.
\end{proof}
Note that, essentially, the simplicial lattice $L_\bullet$ is related to multi-dimensional fuzzy sets $L_\infty$, but is essentially different. Although they are constructed using the lattices $L_n$, $L_\infty$ lacks the mappings that endow $L_\bullet$ with the simplical lattice structure.

\begin{exa}
    As an example of the potential of this simplicial lattice structure, note that by using the face and degeneracy maps, we can increase or decrease the dimension and, with this, construct an element of $L_2$ from another in $L_3$ and vice versa. For instace, to build mappings from $L_3$ to $L_2$, we can consider the face maps $d_i:L_{3}\to L_{2}$, $0\le i\le 2$ given by
\begin{equation*}
    d_i(x_1,x_2.,x_3)= \begin{cases}
(x_{2},x_3),&i=0,\\
(x_1,x_3),&i=1,\\
(x_{1},x_2),&i=2,
\end{cases}.
\end{equation*}
Using Propositions \ref{prop:BUI_Atan} and \ref{prop:L3isom}, the previous mappings induce $d_i:CII\to BUI$, $0\le i\le 2$, given by 
\begin{equation*}
    d_i(x,[a^-,a^+])= \begin{cases}
\left(\frac{x}{x+1-a^+},x+1-a^+\right),&i=0,\\
\left(\frac{a^-}{a^-+1-a^+},a^-+1-a^+\right),&i=1,\\
\left(\frac{a^-}{a^-+1-x},a^-+1-x\right),,&i=2.
\end{cases}
\end{equation*}
In the same way, we can consider the degeneracy maps  $s_0,s_1:L_{2}\to L_{3}$ defined as
\begin{equation*}
    s_j(x_1,x_2)= \begin{cases}
(x_1,x_1,x_{2}),&j=0,\\
(x_{1},x_{2},x_2),&j=1
\end{cases}
\end{equation*}
to induce degeneracy maps  $s_0,s_1:BUI\to CII$ by 
\begin{equation*}
    s_j(x,c)=s_j(cx,cx+1-c)= \begin{cases}
(cx,[cx,cx+1-c])&j=0,\\
(cx+1-x,[cx,cx+1-c]),&j=1
\end{cases}
\end{equation*}
\end{exa}
\begin{exa}
Let us consider again $n$-ICUI granules defined in the previous section. Let us recall the case in which the interval associated with $s_n$ is degenerated. This scenario corresponds with the degeneracy map $s_n:L_{2n-1}\to L_{2n}$ given by 
\begin{equation*}
    s_n(x_n^-,...,x_2^-,x,x_2^+,...,,x_n^+)=(x_n^-,...,x_2^-,x,x,x_2^+,...,,x_n^+).
\end{equation*}
Keep in mind that the interpretation of $s_n$ in cognitive uncertain language is just an inclusion of the set of $n-$ICII into the set of $n$-ICUI. Roughly speaking, $L_{2n-1}\subset L_{2n}$. In fact, the set $L_{2n}\setminus L_{2n-1}$ reduces to degenerative ones. Other particular cases can be constructed when some $x_i^-=x_{i+1}^-$ or $x_i^+=x_{i+1}^+$, for some $j$ (or even more). All of these correspond with the other degeneracy mappings $s_j$., i.e, in  $L_{2n}\setminus L_{2n-1}$. The simplicial structure of $L_\bullet$ allows us to go from $n-$ICII to $n$-ICUI and back again.
\end{exa}
\begin{exa}
As another example, we highlight that this structure allows looking at a triangular fuzzy number as the natural particular case of a trapezoidal fuzzy number (provided that the supports of both families whithin bounded subset of the real line). Since triangular fuzzy numbers are bijective with $L_3$ and trapezoidal fuzzy numbers are bijective with $L_4$, it suffices to consider the degeneracy map $s_1:L_3\to L_4$ defined by $(a,b,c)\to (a,b,b,c)\pt (a,b,c)\in L_3$. Of course, the face maps allow defuzzifying a trapezoidal fuzzy number by forgetting one component, i.e., by losing information.
\end{exa}

\begin{exa}
The $n$-ICUI structure provides a robust geometric bridge between discrete preference levels and the continuous framework of $\alpha$-cuts in fuzzy set theory. Consider an evaluator providing a 3-ICUI granule, which corresponds to an element in $L_6$ given by the sequence $(x_3^-, x_2^-, x_1^-, x_1^+, x_2^+, x_3^+) \allowbreak\in L_6$. 

Geometrically, this can be interpreted as a "stack" of nested intervals, where each interval $[x_i^-, x_i^+]$ represents the support of the evaluation at a specific confidence or acceptance level $s_i$. This is exactly the constructive definition of a fuzzy set through its $\alpha$-cuts. If we fix a set of levels $0 < \alpha_1 < \alpha_2 < \alpha_3 \le 1$, the $n$-ICUI defines a step fuzzy set $A$ where each interval is the $\alpha_i$-cut, i.e.,  $A_{\alpha_i} = [x_i^-, x_i^+]$.

\begin{figure}[H]
    \centering
    \begin{tikzpicture}[scale=1.2]
        \draw[->] (-0.5,0) -- (5.5,0) node[right] {$X$};
        \draw (0,0.1) -- (0,-0.1) node[below] {$0$};
        \draw (5,0.1) -- (5,-0.1) node[below] {$1$};

        \draw[thick, blue] (1,0.5) -- (4.5,0.5);
        \fill[blue] (1,0.5) circle (1.5pt);
        \fill[blue] (4.5,0.5) circle (1.5pt);
        \node[left, blue, font=\scriptsize] at (1,0.5) {$[x_1^-, x_1^+]$};

        \draw[thick, red] (1.8,1.2) -- (3.8,1.2);
        \fill[red] (1.8,1.2) circle (1.5pt);
        \fill[red] (3.8,1.2) circle (1.5pt);
        \node[left, red, font=\scriptsize] at (1.8,1.2) {$[x_2^-, x_2^+]$};

        \draw[thick, green!60!black] (2.5,1.9) -- (3.2,1.9);
        \fill[green!60!black] (2.5,1.9) circle (1.5pt);
        \fill[green!60!black] (3.2,1.9) circle (1.5pt);
        \node[left, green!60!black, font=\scriptsize] at (2.5,1.9) {$[x_3^-, x_3^+]$};

        \draw[dashed, gray!50] (1,0.5) -- (1,0);
        \draw[dashed, gray!50] (4.5,0.5) -- (4.5,0);
        \draw[dashed, gray!50] (2.5,1.9) -- (2.5,0);
        \draw[dashed, gray!50] (3.2,1.9) -- (3.2,0);
    \end{tikzpicture}
    \caption{Representation of a 3-ICUI as nested intervals.}
\end{figure}
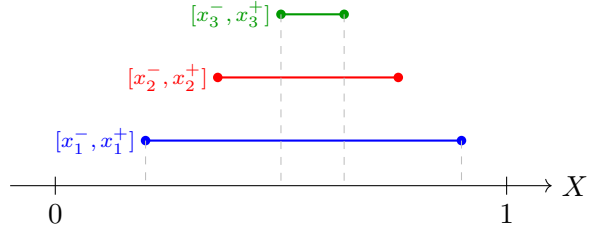

By applying the degeneracy maps $s_j$ within the simplicial lattice $L_\bullet$, we can refine this representation. For instance, $s_j$ acts as an interpolation operator that adds a redundant level, which can then be "pushed" to accommodate a new $\alpha$-cut. Conversely, the face maps $d_i$ allow for the simplification of a fuzzy set into a lower-dimensional preference structure (like a CII or BUI granule) by omitting specific levels of granularity, as illustrated below.

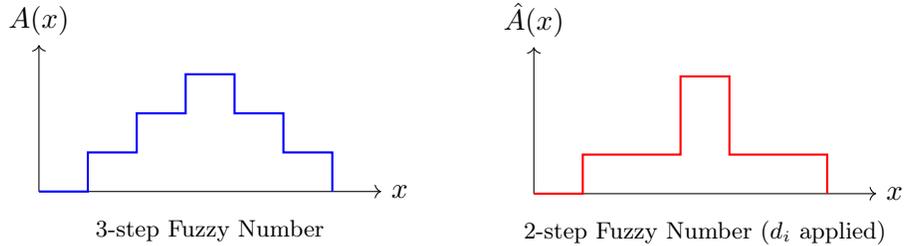
\begin{figure}[H]
    \centering
    \begin{minipage}{0.48\textwidth}
        \centering
        \begin{tikzpicture}[scale=1.3]
            \draw[->] (0,0) -- (3.5,0) node[right] {$x$};
            \draw[->] (0,0) -- (0,1.5) node[above] {$A(x)$};
            \draw[thick, blue] (0,0) -- (0.5,0) -- (0.5,0.4) -- (1,0.4) -- (1,0.8) -- (1.5,0.8) -- (1.5,1.2) -- (2,1.2) -- (2,0.8) -- (2.5,0.8) -- (2.5,0.4) -- (3,0.4) -- (3,0);
            \node at (1.75,-0.4) {\footnotesize 3-step Fuzzy Number};
        \end{tikzpicture}
    \end{minipage}
    \hfill
    \begin{minipage}{0.48\textwidth}
        \centering
        \begin{tikzpicture}[scale=1.3]
            \draw[->] (0,0) -- (3.5,0) node[right] {$x$};
            \draw[->] (0,0) -- (0,1.5) node[above] {$\hat{A}(x)$};
            \draw[thick, red] (0,0) -- (0.5,0) -- (0.5,0.4) -- (1.5,0.4) -- (1.5,1.2) -- (2,1.2) -- (2,0.4) -- (3,0.4) -- (3,0);
            \node at (1.75,-0.4) {\footnotesize 2-step Fuzzy Number ($d_i$ applied)};
        \end{tikzpicture}
    \end{minipage}
    \caption{Transformation of a 3-step fuzzy number into a 2-step fuzzy number using simplicial face maps.}
\end{figure}
\end{exa}

\begin{exa}
Consider a multi-expert decision-making scenario where two experts, $E_1$ and $E_2$, evaluate the same alternative. Expert $E_1$ provides a BUI granule $(x, c) = (0.7, 0.8)$, indicating a high evaluation with strong certainty. Expert $E_2$, who is more specialized, provides a CII granule $(y, [a^-, a^+]) = (0.6, [0.4, 0.9])$, representing an expected value with a specific acceptance interval.

To compare or aggregate these evaluations, we must embed them into a common simplicial space. Using the isomorphisms established in Propositions \ref{prop:BUI_Atan} and \ref{prop:L3isom}, we map these to the order polytopes $L_2$ and $L_3$:
\begin{itemize}
    \item For $E_1$: $(x,c) \mapsto I_{(0.7, 0.8)} = (0.56, 0.76) \in L_2$.
    \item For $E_2$: $(y, [a^-, a^+]) \mapsto (0.4, 0.6, 0.9) \in L_3$.
\end{itemize}

We can use the degeneracy maps $s_j: L_2 \to L_3$ to "lift" $E_1$'s preference into the higher-dimensional space of $E_2$. Applying $s_1$:
\begin{equation*}
    s_1(0.56, 0.76) = (0.56, 0.76, 0.76) \in L_3.
\end{equation*}
This identifies the BUI granule as a CII granule where the expected value coincides with the upper bound of the acceptance interval. Conversely, if we wish to simplify $E_2$'s information to the granularity of $E_1$, we apply the face map $d_1: L_3 \to L_2$:
\begin{equation*}
    d_1(0.4, 0.6, 0.9) = (0.4, 0.9) \in L_3.
\end{equation*}
This reduction removes the internal "expected value" $y=0.6$, preserving only the boundary uncertainty $[a^-, a^+]$, which can then be converted back to a BUI granule via the inverse mapping defined in Proposition \ref{prop:BUI_Atan}. 
\end{exa}

\begin{exa}
   This example relates to PLTSs and multi-granularity. Let us consider a group decision-making problem in which the preferences over the alternative set $X$ are given using PLTSs. However, the decision-makers show very different levels of expertise, and thus they need to use LTSs with different granularities: people with less experience will use $S_3=\{\text{low}, \text{medium}, \text{high}\}$, while those more experienced require more levels $S_5=\{\text{low}, \text{medium low},\allowbreak \text{medium}, \text{medium high}, \text{high}\}$. To address this issue, we need a way to embed the PLTSs on $S_3$ into the PLTSs on $S_5$. By our previous discussion, we know that these families are equivalent to $\Delta_4$ and $\Delta_6$ or, equivalently, to $L_3$ and $L_4$ or $\mathbb{A}_3$ and $\mathbb{A}_5$. Let us consider the bijection $\psi_n:\mathbb{A}_n\to L_n$. 
    \begin{gather*}        (p_1,p_2,p_3)\to\psi_3(p_1,p_2,p_3)=(p_1,p_1+p_2,p_1+p_2+p_3)\\
        \to s_0\circ\psi_3(p_1,p_2,p_3)=(p_1,p_1,p_1+p_2,p_1+p_2+p_3)\\\to s_2\circ s_0\circ\psi_3(p_1,p_2,p_3)=(p_1,p_1,p_1+p_2,p_1+p_2,p_1+p_2+p_3)\\
        \to \psi_5^{-1}s_2\circ s_0\circ\psi_3(p_1,p_2,p_3)=(p_1,0,p_2,0,p_3)
    \end{gather*}
    
    Thus, we can compose the corresponding bijection with the degeneracy maps and obtain the embedding $\psi_5^{-1}s_2\circ s_0\circ\psi_3:\mathbb{A}_3\to\mathbb{A}_5$ induced by the simplicial set structure.
\end{exa}

\section{Conclusion}\label{sec:conclusion}

In this work, we have provided a unifying geometric framework for the study of uncertain information and preference structures by identifying them with the simplicial structure of $n$-dimensional fuzzy sets. 

Throughout our study, we have revised and integrated many classical and modern preference structures from the literature, demonstrating that they are distinct semantic interpretations of the same topological objects. Specifically, we have shown the equivalence between 2-dimensional fuzzy sets and structures such as: Interval-valued and Atanassov intuitionistic fuzzy sets \cite{ATANASSOV1989343, CousoAtanasov},  Basic Uncertain Information (BUI) granules \cite{BUI}, Fuzzy rough sets \cite{pawlak1982rough} and shadowed sets \cite{Shadowed}, Grey sets \cite{YANG2012249} and vague sets \cite{vague}.

Furthermore, we extended this analysis to higher dimensions, connecting 3-dimensional fuzzy sets with Cognitive Interval Information (CII) granules \cite{HCYL}, Asymmetric Interval Numbers (AIN) \cite{Salabun}, and Picture Fuzzy Sets \cite{Cuong2014}.

A central contribution of this paper is the introduction of a new interpretable preference structure, namely $n$-dimensional Interval (type) cognitive uncertain information,  that can be constructed using Deck-of-Cards membership functions \cite{DoCMF}. This approach generalizes the revised structures by providing a flexible mechanism to represent complex membership degrees. By framing these as $n$-dimensional fuzzy sets, we leverage the geometric properties of the order polytope $L_n$, where the reduction in volume compared to the unit hypercube $[0,1]^n$ quantifies the information gain achieved by imposing monotonicity constraints.

Finally, we have established a simplicial structure for the set of multidimensional fuzzy sets $L_\infty$. By utilizing face maps $d_i$ and degeneracy maps $s_j$, we provide a formal way to increase or decrease the granularity of information, effectively unifying existing structures into a single simplicial set $L_\bullet$. This connection is not merely theoretical; as demonstrated in the examples in our final section, it allows for the seamless transformation between different levels of uncertainty, such as embedding preferences from lower-granularity scales into higher ones, providing a robust foundation for group decision-making under heterogeneous expertise.

\bibliographystyle{elsarticle-harv}

\end{document}